\newtheorem{theorem}{Theorem}
\newtheorem{definition}{Definition}
\newtheorem{lemma}{Lemma}
\newtheorem{observation}{Observation}
\newtheorem{remark}{Remark}
\title{\LARGE \bf
Lower Bounds on the Performance of Analog to Digital Converters\thanks{}}
\author{Mitra Osqui$\dagger $ \ \ \ \ \ \ \ Alexandre Megretski$\ddagger $ \
\ \ \ \ \ \ Mardavij Roozbehani$\sharp \vspace{-0.15in}$ \thanks{Project partially supported by: Army Research Office ELASTx program.} 
\thanks{$\dagger$Mitra Osqui is currently a Ph.D. candidate at the department of EECS,
Laboratory for Information and Decision Systems (LIDS) at the Massachusetts
Institute of Technology, Cambridge, MA. E-mail: mitra@mit.edu}\thanks{$%
\ddagger $ Alexandre Megretski is currently a professor of EECS at LIDS at
MIT, Cambridge, MA. E-mail: ameg@mit.edu. }\thanks{$\sharp $ Mardavij
Roozbehani is currently a principal research scientist at LIDS at MIT, Cambridge, MA.
E-mail: mardavij@mit.edu. }}
\begin{document}

\maketitle
\thispagestyle{empty}
\pagestyle{empty}

%%%%%%%%%%%%%%%%%%%%%%%%%%%%%%%%%%%%%%%%%%%%%%%%%%%%%%%%%%%%%%%%%%%%%%%%%%%%%%%%
\begin{abstract}

This paper deals with the task of finding certified lower bounds for the performance of Analog to Digital Converters (ADCs). A general ADC is modeled as a causal, discrete-time dynamical system with outputs taking values in a finite set. We define the performance of an ADC as the worst-case average intensity of the filtered input matching error, defined as the difference between the input and output of the ADC. The passband of the shaping filter used to filter the error signal determines the frequency region of interest for minimizing the error. The problem of finding a lower bound for the performance of an ADC is formulated as a dynamic game problem in which the input signal to the ADC plays against the output of the ADC. Furthermore, the performance measure must be optimized in the presence of quantized disturbances (output of the ADC) that can exceed the control variable (input of the ADC) in magnitude. We characterize the optimal solution in terms of a Bellman-type inequality. A numerical approach is presented to compute the value function in parallel with the feedback law for generating the worst case input signal. The specific structure of the problem is used to prove certain properties of the value function that simplifies the iterative computation of a certified solution to the Bellman inequality. The solution provides a certified lower bound on the performance of any ADC with respect to the selected performance criteria. 

\end{abstract}

%%%%%%%%%%%%%%%%%%%%%%%%%%%%%%%%%%%%%%%%%%%%%%%%%%%%%%%%%%%%%%%%%%%%%%%%%%%%%%%%
\section{INTRODUCTION AND MOTIVATION}

Analog to Digital Converters (ADCs) act as the interface between the analog world and digital processors. 
They are present in almost all digital control and communication systems and modern high-speed data 
conversion and storage systems. Naturally, the design and analysis of ADCs have, for many years, attracted 
the attention and interest of researchers from various disciplines across academia and industry. Despite 
the progress that has been made in this field, the design of optimal ADCs remains an open challenging problem, 
and the fundamental limitations of their performance are not well understood. This paper is concerned 
with the latter problem.
\thispagestyle{empty}\pagestyle{empty}

A particular class of ADCs primarily used in high resolution applications
is\ the Delta-Sigma Modulator (DSM). Fig. \ref{fig_t}, illustrates the
classical first-order DSM \cite{PhDBib:Oppenheim}, where $Q$ is a
quantizer with uniform step size.

%%%%%%%%%%%% FIGURE %%%%%%%%%%%%%%%%%
\setlength{\unitlength}{2200sp}
\begingroup\makeatletter\ifx\SetFigFont\undefined\gdef\SetFigFont#1#2#3#4#5{\reset@font\fontsize{#1}{#2pt}\fontfamily{#3}\fontseries{#4}\fontshape{#5}\selectfont}\fi\endgroup
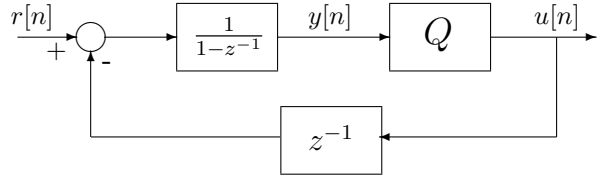
\begin{figure}[h]\begin{center}\begin{picture}(6549,1524)(514,-3298)
\thinlines
\put(1351,-2161){\circle{336}}
\put(2326,-2536){\framebox(1125,750){}}
\put(4726,-2536){\framebox(1125,750){}}

\put(3500,-3700){\framebox(1125,750){}}

\put(1501,-2161){\vector( 1, 0){825}}
\put(3451,-2161){\vector( 1, 0){1275}}
\put(6601,-2161){\line( 0,-1){1125}}

\put(6601,-3286){\vector(-1, 0){2000}}

\put(3485,-3286){\line(-1, 0){2130}}

\put(1351,-3286){\vector( 0, 1){975}}
\put(526,-2161){\vector( 1, 0){675}}
\put(5851,-2161){\vector( 1, 0){1200}}
\put(2500,-2236){\makebox(0,0)[lb]{\smash{\SetFigFont{12}{14.4}{\rmdefault}{\mddefault}{\updefault}$\frac{1}{1-z^{-1}}$}}}

\put(3800,-3436){\makebox(0,0)[lb]{\smash{\SetFigFont{12}{14.4}{\rmdefault}{\mddefault}{\updefault}$z^{-1}$}}}

\put(1470,-2536){\makebox(0,0)[lb]{\smash{\SetFigFont{12}{14.4}{\rmdefault}{\mddefault}{\updefault}-}}}
\put(850,-2400){\makebox(0,0)[lb]{\smash{\SetFigFont{10}{14.4}{\rmdefault}{\mddefault}{\updefault}$+$}}}
%\put(1215,-2250){\makebox(0,0)[lb]{\smash{\SetFigFont{12}{14.4}{\rmdefault}{\mddefault}{\updefault}$+$}}}
\put(450,-2011){\makebox(0,0)[lb]{\smash{\SetFigFont{10}{14.4}{\rmdefault}{\mddefault}{\updefault}$r[n]$}}}
\put(3800,-2011){\makebox(0,0)[lb]{\smash{\SetFigFont{10}{14.4}{\rmdefault}{\mddefault}{\updefault}$y[n]$}}}
\put(5100,-2236){\makebox(0,0)[lb]{\smash{\SetFigFont{14}{14.4}{\rmdefault}{\mddefault}{\updefault}$Q$}}}
\put(6350,-2011){\makebox(0,0)[lb]{\smash{\SetFigFont{10}{14.4}{\rmdefault}{\mddefault}{\updefault}$u[n]$}}}

\end{picture}
\end{center}
\vspace{.05in}
\caption{Classical First-Order Sigma-Delta Modulator}
\vspace{-.05in}
\label{fig_t}
\end{figure}
%%%%%%%%%%%%%%%%%%%%%%%%%%%%%%%%%%%%%%%%%%%

An extensive body of research on DSMs has appeared in the signal processing
literature. One well known approach is based on linearized additive noise
models and filter design for noise shaping \cite{PhDBib:Oppenheim}-\nocite%
{Derpich2008}\nocite{LNM1}\nocite{PhDBib:adhocdesign2}\nocite%
{PhDBib:SchreierTemes2005}\nocite{PhDBib:SchreierTemes2005}\cite%
{PhDBib:Norsworthy1997}. The underlying assumption for validity of the
linearized additive noise model is availability of a relatively high number
of bits. Alternative approaches based on a formalism of the signal
transformation performed\ by the quantizer have been exploited for
deterministic analysis in \cite{PhDBib:ThaoVetterli31}-\nocite%
{PhDBib:ThaoVetterli30}\cite{PhDBib:Thao2006Tile}. Some other works that do
not use linearized additive noise models are reported in \cite{QuevedoGoodwin}-\nocite%
{SteinerYang}\cite{Wang}.

In the control field, \cite{BOYDWang2}-\nocite{BOYDWang}\cite{BOYDWang3} find performance bounds and suboptimal policies for linear stochastic control problems using Bellman inequalities with quadratic value functions. The problem is relaxed and solved using linear matrix inequalities and semidefinite programming. For references on quantized control, please see \cite{Bullo2006}-\nocite{Brockett2000}\cite{EliaMitter}. 
%Work on optimal dynamic quantization can be found in \cite{Azuma2008J}.

In \cite{PhDBib:Mitra1} we provided a characterization of the solution to the optimal ADC design problem and presented a generic methodology for
numerical computation of sub-optimal solutions along with computation of a certified upper bound on the performance. The performance of an ADC is evaluated 
with respect to a cost function which is a measure of the intensity of the error signal (the difference between the input signal and its quantized version) for the worst case input. 
The error signal is passed through a shaping filter which dictates the frequency region in which the error is to be minimized. Furthermore, we showed that the dynamical system within the optimal ADC is a copy of the shaping filter used to define the performance criteria. In \cite{PhDBib:Mitra1} we also presented an exact analytical solution to the optimal ADC for first-order shaping filters, and showed that the classical first-order DSM (Figure \ref{fig_t}) is identical to our optimal ADC. This result proved the optimality of the classical first-order DSM with respect to the adopted performance measure, and was a step towards understanding the limitations of performance.  

In this paper, we present a framework for finding \textit{certified lower bounds} for the performance of ADCs with shaping filters of arbitrary order.
We use the same ADC model and performance measure adopted in \cite{PhDBib:Mitra1}. The objective is to find a lower bound on the infimum of the cost function. 
The approach is to find a feedback law for generating the input of the ADC such that regardless of its output, the performance is bounded from below 
by a certain value. Thus, the input of the ADC is viewed as the control, and the problem is posed within a non-linear optimal feedback control/game framework. We show that the optimal control law can be characterized in terms of a \textit{value function} satisfying an analog of the 
Bellman inequality. The value function in the Bellman inequality and the corresponding control law can be jointly computed via 
value iteration. 

%We emphasize that the value function corresponding to the lower bound problem is very different than the value function associated with the upper bound problem previously reported in \cite{PhDBib:Mitra1}.

Since searching for the value function involves solving a sequence of infinite dimensional optimization problems, some approximations are needed for numerical computation. First, a finite-dimensional parameterization of the value function is selected. Second, the state space and 
the input space are discretized. Third, the computations are restricted to a finite subset of the space. The latter step deserves 
further elaboration. If the dynamical system inside the ADC is strictly stable, then a bounded control invariant set exists, thus it is possible 
to do the computations over a bounded region. The challenge arises when the filter has poles on the unit circle. In this case, 
there does not exist a bounded control invariant set, since the disturbances can exceed the control variable in magnitude. Under the condition that there is at most one pole on the unit circle, we present a theorem that states that the value function is zero outside a certain bounded space. Thus, we have an a priori knowledge of an analytic expression for the value function beyond a bounded region. As a result, 
the computations need to be carried out only over this bounded region. This is in dramatic contrast with the case of upper bound computations \cite{PhDBib:Mitra1}, something to be discussed in section \ref{OurApp}.

%The main contributions of this paper are as follows.

The organization is as follows. Section \ref{ProbForm}
provides a rigorous problem formulation. The main contributions are
presented in Section \ref{OurApp} and \ref{TexSta}. Section \ref{OurApp} describes
our methodology for finding certified lower bounds for ADCs. Section \ref{TexSta} provides
our theoretical results. We provide an example in section \ref{NumEx}, and section \ref{Future} concludes the paper.
%use our algorithm to provide a certified lower bound for the performance of the ADC from the example in \cite{PhDBib:Mitra1}, for which we had given a suboptimal design for the ADC with respect to the selected shaping filter along with an upper bound on its performance. Finally, Section \ref{Future} concludes the paper.

\subsection*{Notation and Terminology:}
\begin{itemize}

\item Function $f:\mathbb{R}^m \mapsto \mathbb{R}$ is called BIBO, if the image of every bounded subset $\Omega\subset\mathbb{R}^{m}$ under $f$, $f(\Omega)$, is bounded.
%Function $f:\mathbb{R}^m \mapsto \mathbb{R}$ is called BIBO if condition 
%\begin{equation}
%\sup_{x\in\Omega}|f(x)|<\infty
%\end{equation}
%holds for every bounded set $\Omega\subset\mathbb{R}^{m}$.
\medskip

\item Given a set $P$, $\ell_+(P)$ is the set of all one-sided sequences $x$ with values in $P$, i.e. functions $x:\mathbb{Z}_+ \mapsto P$.
%Given a set $P$, $\ell_+(P)$ is the set of all sequences that map $\mathbb{Z}_+$ to $P$:
%\begin{align}
%\ell_+(P) & \;  {\buildrel\rm def\over =}\; \left\{x:\mathbb{Z}_+ \mapsto P \right\}. 
%\end{align}

\medskip

\item The $\infty-$norm is defined as:
\begin{equation*}
\|v\|_\infty = \max |v_i|, \quad \text{for} \quad v =  \left[
\begin{array}
[c]{ccc}
v_1 \\  \vdots \\  v_m
\end{array}
\right] \in  \mathbb{R}^m
\end{equation*}
and
\begin{equation*}
\|M\|_\infty  \;  {\buildrel\rm def\over =}\; \sup_{v \ne 0}\frac{\|Mv\|_\infty}{\|v\|_\infty} =\max_{i \in\{1, \cdots, l\}} \sum_{j=1}^{m}|M_{ij}|
\end{equation*}
for a matrix $M = \left(M_{ij}\right) \in \mathbb{R}^{l \times m}$.

%The $\infty-$norm of a vector $v = (v_1, \cdots, v_m)$ in $\mathbb{R}^m$ is defined as:
%\begin{equation}
%\|v\|_\infty = \max_{i \in\{1, \cdots, m\}} |v_i|
%\end{equation}
%The $\infty-$norm of a matrix $M \in \mathbb{R}^{ l \times m}$ is defined as:
%\begin{align}
%\|M\|_\infty  & \;  {\buildrel\rm def\over =}\; \sup_{v \ne 0}\frac{\|Mv\|_\infty}{\|v\|_\infty}\\
%                   & =\max_{i \in\{1, \cdots, l\}} \sum_{j=1}^{m}|\mu_{ij}|
%\end{align}
%where $\mu_{ij}$ is the $(i,j)^{th}$ element of $M$.

\item Let $X$ be a set and $f:X \mapsto \mathbb{R}$ be a function. For every $\epsilon > 0$,
%\begin{equation}
%\arg^\epsilon \inf_{x \in X} f(x)  \;  {\buildrel\rm def\over =}\; \left \{ x \in X : f(x) < \epsilon +  \inf_{x \in X} f(x) \right \}
%\end{equation}
%and
\begin{equation}
\arg^\epsilon \sup_{x \in X} f(x)  \;  {\buildrel\rm def\over =}\; \left \{ x \in X : f(x) > -\epsilon +  \sup_{x \in X} f(x) \right \}.
\end{equation}

\end{itemize}
%\begin{table}
%\caption{An Example of a Table}
%\label{table_example}
%\begin{center}
%\begin{tabular}{|c||c|}
%\hline
%One & Two\\
%\hline
%Three & Four\\
%\hline
%\end{tabular}
%\end{center}
%\end{table}

%%%%%%%%%%%%%%%%%%%%%%%%%%%%%%%%%%%%%%%%%%%%%%%%%%%%%%%%%%%%%%%%%%%%%%%%%%%%%%%%
\section{PROBLEM FORMULATION\label{ProbForm}}

The problem setup in this section is taken from \cite{PhDBib:Mitra1}.

%%%%%%%%%%%%
\subsection{Analog to Digital Converters}

In this paper, a general ADC is viewed as a causal, discrete-time, non-linear
system $\Psi,$ accepting arbitrary inputs in the $[-1,1]$ range, and producing
outputs in a fixed finite subset $U\subset\mathbb{R},$ as shown in Fig.
\ref{fig_a}. We assume that the smallest element in the set $U$ is less than $-1$ and the largest element is greater than $1$.\vspace{0.15in}%

\setlength{\unitlength}{1800sp}
\begingroup\makeatletter\ifx\SetFigFont\undefined\gdef\SetFigFont
#1#2#3#4#5{\reset@font\fontsize{#1}{#2pt}\fontfamily{#3}\fontseries
{#4}\fontshape{#5}\selectfont}\fi\endgroup\begin{figure}[h]\begin{center}%
\begin{picture}(6549,1524)(514,-3298)
\thinlines\put(2350,-2736){\framebox(2200,1250){}}
\put(800,-2161){\vector( 1, 0){1570}}
\put(4550,-2161){\vector( 1, 0){1275}}
\put(3200,-2300){\makebox(0,0)[lb]{\smash{\SetFigFont{18}{14.4}{\rmdefault}{\mddefault}{\updefault}$\Psi$}}}
\put(200,-1900){\makebox(0,0)[lb]{\smash{\SetFigFont{10}{14.4}{\rmdefault}{\mddefault}{\updefault}$r[n] \in[-1,1]$}}}
\put(200,-2560){\makebox(0,0)[lb]{\smash{\SetFigFont{10}{14.4}{\rmdefault}{\mddefault}{\updefault}$n \in\mathbb{Z}_+$}}}
\put(5000,-1900){\makebox(0,0)[lb]{\smash{\SetFigFont{10}{14.4}{\rmdefault}{\mddefault}{\updefault}$u[n]\in U$}}}
\put(5000,-2560){\makebox(0,0)[lb]{\smash{\SetFigFont{10}{14.4}{\rmdefault}{\mddefault}{\updefault}$n \in\mathbb{Z}_{+}$}}}
\end{picture}
\end{center}
\vspace*{-.2in}
\caption{Analog to Digital Converter as a Dynamical System}
\vspace{-.05in}
\label{fig_a}
\end{figure}
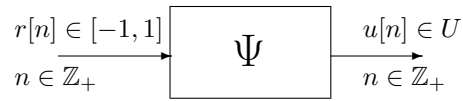%

Equivalently, an ADC is defined by a sequence of functions
$\Upsilon_{n}:\left[  -1,1\right]  ^{n+1}\mapsto U$ according to
\smallskip
\begin{equation}
\Psi:~u[n]=\Upsilon_{n}\left(  r[n]  ,r[ n-1], \cdots ,r[0]  \right)
,~~n\in\mathbb{Z}_{{\small +}}.\label{general_u}%
\end{equation}

The class of ADCs defined above is denoted by $\mathcal{Y}_{U}.$

%%%%%%%%%%%%%%%%%%%%%%
\subsection{Asymptotic Weighted Average Intensity (AWAI) of a Signal}

The Asymptotic Weighted Average Intensity $\eta_{G,\phi}\left(  w\right)$ of a signal $w$ with respect to the transfer function
$G\left(  z\right)  $ of a strictly causal LTI\ dynamical system $L_{G}$ and a non-negative function $\phi:\mathbb{R}\mapsto\mathbb{R}_{+}$ is given by:
\begin{equation}
\eta_{G,\phi}\left(  w\right)  =\underset{N\mapsto\infty}{\lim\sup}%
\frac{1}{N+1}%
{\displaystyle\sum\limits_{n=0}^{N}}
\phi\left(  q[n]  \right)  ,\label{AWA}%
\end{equation}
\noindent where the sequence $q$ is the response to input $w$ of the dynamical system: 
%\begin{align}
%x[n+1]   & =Ax[n]  +Bw[n], \quad x[0] =0, \quad\forall n\in\mathbb{Z}_{+}\label{LG0}\\
%q[n]   & =Cx[n],\label{LG2}%
%\end{align}

\begin{equation}
L_G:
\begin{array}
[c]{cc}
x[n+1]  =Ax[n]  +Bw[n], \quad x[0] =0, \label{LG0}\\ q[n]  =Cx[n]
\end{array}
\end{equation}

\noindent and $A,$ $B,$ $C$ are given matrices of appropriate dimensions. Examples of functions $\phi$ to consider are: $\phi(q )=\left\vert q \right \vert$ and $\phi(q)=\left\vert q \right \vert^2$.

%
%\setlength{\unitlength}{2200sp}\begingroup\makeatletter\ifx\SetFigFont
%\undefined\gdef\SetFigFont#1#2#3#4#5{  \reset@font\fontsize{#1}{#2pt}
%\fontfamily{#3}\fontseries{#4}\fontshape{#5}  \selectfont}\fi\endgroup
%\begin{figure}[h]\begin{center}\begin{picture}(8649,1599)(289,-2773)
%\thinlines\put(2576,-2086){\framebox(1350,900){}}
%\put(1550,-1636){\vector( 1, 0){1000}}
%\put(3926,-1636){\vector( 1, 0){1000}}
%\put(1600,-1486){\makebox(0,0)[lb]{\smash{\SetFigFont{10}{16.8}{\rmdefault}{\mddefault}{\updefault}$w[n]$}}}
%\put(4250,-1486){\makebox(0,0)[lb]{\smash{\SetFigFont{10}{16.8}{\rmdefault}{\mddefault}{\updefault}$q[n]$}}}
%\put(3000,-1700){\makebox(0,0)[lb]{\smash{\SetFigFont{14}{16.8}{\rmdefault}{\mddefault}{\updefault}$L_G$}}}
%\end{picture}
%\end{center}
%\vspace*{-.3in}
%\caption{Strictly Proper LTI Shaping Filter $G(z)$}
%\vspace*{-.2in}
%\label{fig_b}
%\end{figure}
%%%%%%%%%%%%%%
%\smallskip
\subsection{ADC Performance Measure\label{PerfMeas}}

The setup that we use to measure the performance of an ADC is illustrated in
Fig. \ref{fig1}. The performance measure of $\Psi\in\mathcal{Y}_{U}$,
denoted by $\mathcal{J}_{G,\phi}\left(  \Psi\right)  ,$ is the worst-case AWAI
of the error signal for all input sequences $r \in \ell_+(\left[
-1,1\right])  ,$ that is:%
\begin{equation}
\mathcal{J}_{G,\phi}\left(  \Psi\right)  =\sup_{r\in\ell_+(\left[-1,1\right])}\eta_{G,\phi
}\left( r - \Psi\left(  r\right)\right). \label{perfmeas}
\end{equation}

\setlength{\unitlength}{2200sp}\begingroup\makeatletter\ifx\SetFigFont
\undefined\gdef\SetFigFont#1#2#3#4#5{  \reset@font\fontsize{#1}{#2pt}
\fontfamily{#3}\fontseries{#4}\fontshape{#5}  \selectfont}\fi\endgroup
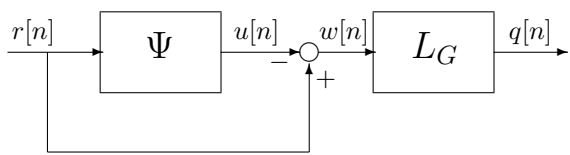
\begin{figure}[h]\begin{center}\begin{picture}(8649,1599)(289,-2773)
\thinlines\put(3850,-1636){\circle{212}}
\put(1500,-2086){\framebox(1350,900){}}
\put(4576,-2086){\framebox(1350,900){}}
\put(2850,-1636){\vector( 1, 0){900}}
\put(3950,-1636){\vector( 1, 0){650}}
\put(5926,-1636){\vector( 1, 0){840}}
\put(3850,-2761){\vector( 0, 1){1000}}
\put(900,-2761){\line(1, 0){2950}}
\put(900,-2761){\line( 0, 1){1125}}
\put(450,-1636){\vector( 1, 0){1080}}
\put(2000,-1700){\makebox(0,0)[lb]{\smash{\SetFigFont{14}{16.8}{\rmdefault}{\mddefault}{\updefault}$\Psi$}}}
\put(3920,-2011){\makebox(0,0)[lb]{\smash{\SetFigFont{10}{16.8}{\rmdefault}{\mddefault}{\updefault}$+$}}}
%\put(3750,-1711){\makebox(0,0)[lb]{\smash{\SetFigFont{10}{16.8}{\rmdefault}{\mddefault}{\updefault}$+$}}}
\put(3400,-1820){\makebox(0,0)[lb]{\smash{\SetFigFont{10}{16.8}{\rmdefault}{\mddefault}{\updefault}$-$}}}
\put(500,-1486){\makebox(0,0)[lb]{\smash{\SetFigFont{10}{16.8}{\rmdefault}{\mddefault}{\updefault}$r[n]$}}}
\put(3000,-1486){\makebox(0,0)[lb]{\smash{\SetFigFont{10}{16.8}{\rmdefault}{\mddefault}{\updefault}$u[n]$}}}
\put(3960,-1486){\makebox(0,0)[lb]{\smash{\SetFigFont{10}{16.8}{\rmdefault}{\mddefault}{\updefault}$w[n]$}}}
\put(6100,-1486){\makebox(0,0)[lb]{\smash{\SetFigFont{10}{16.8}{\rmdefault}{\mddefault}{\updefault}$q[n]$}}}
\put(5000,-1700){\makebox(0,0)[lb]{\smash{\SetFigFont{14}{16.8}{\rmdefault}{\mddefault}{\updefault}$L_G$}}}
\end{picture}
\end{center}
\vspace*{0.0in}
\caption{Setup Used for Measuring the Performance of the ADC}
\vspace*{-.1in}
\label{fig1}
\end{figure}

%that when $\phi\left(  \cdot\right)  =\left\vert
%\cdot\right\vert ^{2}$ and $L_{G}$ is a strictly stable dynamical system with
%transfer function $G\left(  z\right)  ,$ the AWA can be interpreted as the
%average power of the filtered input for signals which are sums of sinusoids:%
%\begin{equation}
%w\left[  n\right]  =%
%%TCIMACRO{\dsum \limits_{k=0}^{\infty}}%
%%BeginExpansion
%{\displaystyle\sum\limits_{k=0}^{\infty}}
%%EndExpansion
%w_{k}e^{j\omega_{k}n}\mapsto\eta_{G,\phi}\left(  w\right)  =%
%%TCIMACRO{\dsum \limits_{k=0}^{\infty}}%
%%BeginExpansion
%{\displaystyle\sum\limits_{k=0}^{\infty}}
%%EndExpansion
%\left\vert w_{k}\right\vert ^{2}\left\vert G\left(  e^{j\omega_{k}}\right)
%\right\vert ^{2}.\label{sumsin}%
%\end{equation}
%Therefore, the AWA allows for penalizing the input of the filter over the
%frequency range of interest via the pass-band of $G\left(  z\right)  .$ An
%alternative measure can be obtained with $\phi\left(  \cdot\right)
%=\left\vert \cdot\right\vert ,$ which is attractive due to its simplifying
%properties. In this case, the AWA represents the average amplitude of the
%filtered input signal.

%%%%%%%%%%%%%%%%
\subsection{ADC Optimization}

Given $L_{G}$ and $\phi,$ we consider $\Psi_{o}\in\mathcal{Y}_{U}$ an optimal
ADC if $\mathcal{J}_{G,\phi}\left(  \Psi_{o}\right)  \leq\mathcal{J}_{G,\phi
}\left(  \Psi\right)  $ for all $\Psi\in\mathcal{Y}_{U}.$ The corresponding
optimal performance measure $\gamma_{G,\phi}\left(  U\right)  $ is defined as%
\begin{equation}
\gamma_{G,\phi}\left(  U\right)  =\underset{\Psi\in\mathcal{Y}_{U}}{\inf
}\mathcal{J}_{G,\phi}\left(  \Psi\right).\label{cutefuzzfuzz}%
\end{equation}

The objective is to find certified lower bounds for (\ref{cutefuzzfuzz}).

%%%%%%%%%%%%%%%%%%%%%%%%%%%%%%%%%%%%%%%%%%%%%%%%%%%%%%%%%%%%%%%%%%%%%%%%%%%%%%%%
\section{OUR APPROACH\label{OurApp}}
We find the lower bound on the performance of any given ADC belonging to the class $\mathcal{Y}_{U}$ by
associating the problem with a full-information feedback control problem. The objective is to 
find a feedback law for generating the input of the ADC, $r$, such that regardless of the output $u$, the performance is bounded from below
by a certain value. Thus, in this setup, $r$ is viewed as the control and $u$ is viewed as the input of a strictly causal system with output $r$.  The setup is depicted in Fig. \ref{figStateFeedback}, where the function $K_r:\mathbb{R}^{m} \mapsto \left[  -1,1\right]$ is said to be an admissible controller if there exists $\gamma\in\lbrack0,\infty)$ such that every triplet of sequences $(x,u,r)$ satisfying
\begin{align}
x[n+1]   & =Ax[n]  +Br[n] - Bu[n], \quad x[0] =0,\label{x+}\\
r[n]   & =K_r\left(  x[n]\right)  ,\label{OptUK}\\
q[n]   & =Cx[n]  ,\label{qPsi}%
\end{align}
\noindent also satisfies the dissipation inequality
\begin{equation}
\inf_{N}{\displaystyle\sum\limits_{n=0}^{N}}\left(\phi\left(  q[n]  \right) - \gamma\right)
>-\infty. \label{optUcondition}
\end{equation}

Note that if \eqref{optUcondition} holds subject to \eqref{x+}-\eqref{qPsi}, then $\gamma_{G,\phi}\left(  U\right)
\geq\gamma.$ Let $\gamma_{o}$ be the minimal upper bound of $\gamma$, for
which an admissible controller exists. Then $K_r$ is said to be an optimal
controller if (\ref{optUcondition}) is satisfied with $\gamma=\gamma_{o}.$  \vspace{-.1in}
%%%%%%%%%%%%%%%%%%
%It is important to note that in \cite{PhDBib:Mitra1}, since the objective was to design the ADC and to find an upper bound on its performance, the control was the output, $u$, of the ADC. We assumed that the largest element in the set $U$ is strictly greater than the largest value that the input $r$ could take. This assumption is necessary to ensure that, in the case when $\max|eig(A)|=1$, there exists an ADC such that the performance measure is finite.
%%%%%%%%%%%%%%%%This assumption was necessary to ensure that in the case when $max|eig(A)|=1$ there would exist a control such that the state trajectory was bounded. 
%Therefore, this constraint on the set $U$ is present for finding the lower bound as well. However, this presents a difficulty for finding the lower bound, since the roles of $r$ and $u$ are reversed with $r$ being the control, which takes values strictly smaller than the largest value that $u$ can take. Therefore, if $A$ has eigenvalues with unit magnitude, a bounded control invariant set will not exist. We will return to this issue shortly in subsection \ref{NumSolnBell}.

%%%%%%%%%%%%%%%%%%%%%%%%
\setlength{\unitlength}{2200sp}\begingroup\makeatletter\ifx\SetFigFont
\undefined\gdef\SetFigFont#1#2#3#4#5{  \reset@font\fontsize{#1}{#2pt}
\fontfamily{#3}\fontseries{#4}\fontshape{#5}  \selectfont}\fi\endgroup
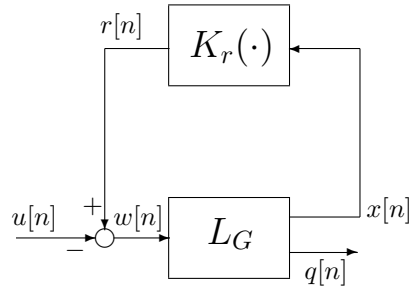
\begin{figure}[h]\begin{center}\begin{picture}(10649,3200)(1289,-1900)
\centering
%\thicklines\put(800,-3436){\dashbox{120}(6250,2950){}}
%\thinlines\put(6470,-3100){\makebox(0,0)[lb]{\smash{\SetFigFont{10}{14.4}{\rmdefault}{\mddefault}{\updefault}$u[n]$}}}
%\put(3201,-2850){\oval(150,150)[br]}
%\put(3201,-2860){\oval(150,150)[tr]}
%\put(3201,-1640){\line( 0,-1){1130}}
%\put(3201,-2920){\line( 0,-1){360}}
%\put(3201,-3270){\vector( 1, 0){4100}}
%\put(4000,-300){\makebox(0,0)[lb]{\smash{\SetFigFont{14}{14.4}{\rmdefault}{\mddefault}{\updefault}$\Psi$}}}
%\put(930,-1000){\makebox(0,0)[lb]{\smash{\SetFigFont{10}{14.4}{\rmdefault}{\mddefault}{\updefault}$x_{\Psi}[n]$}}}
%\put(870,-1300){\vector( 1, 0){610}}
%\put(870,-1300){\line( 0, 1){600}}
%\put(870, -700){\line( 1, 0){5370}}
\put(3850,500){\line( 1, 0){730}}
\put(3850, 500){\vector( 0,-1){2050}}
\put(3800,670){\makebox(0,0)[lb]{\smash{\SetFigFont{10}{16.8}{\rmdefault}{\mddefault}{\updefault}$r[n]$}}}
\put(6726,500){\vector( -1, 0){800}}
\put(6726, 500){\line( 0,-1){1900}}
\put(5926,-1400){\line( 1, 0){800}}
\put(6800,-1400){\makebox(0,0)[lb]{\smash{\SetFigFont{10}{16.8}{\rmdefault}{\mddefault}{\updefault}$x[n]$}}}
\put(3850,-1636){\circle{212}}
%\put(1500,-2086){\framebox(1350,900){}}
\put(4576,86){\framebox(1350,900){}}
\put(4576,-2086){\framebox(1350,900){}}
\put(2850,-1636){\vector( 1, 0){900}}
\put(3950,-1636){\vector( 1, 0){635}}
\put(5926,-1800){\vector( 1, 0){775}}
%\put(3850,-2855){\vector( 0, 1){1125}}
%\put(900,-2855){\line(1, 0){2950}}
%\put(900,-2855){\line( 0, 1){1125}}
%\put(440,-1736){\vector( 1, 0){1050}}
%\put(1800,-1700){\makebox(0,0)[lb]{\smash{\SetFigFont{14}{16.8}{\rmdefault}{\mddefault}{\updefault}$K(\cdot)$}}}
%\put(3750,-1711){\makebox(0,0)[lb]{\smash{\SetFigFont{10}{16.8}{\rmdefault}{\mddefault}{\updefault}$+$}}}
\put(3400,-1850){\makebox(0,0)[lb]{\smash{\SetFigFont{10}{16.8}{\rmdefault}{\mddefault}{\updefault}$-$}}}
\put(3600,-1350){\makebox(0,0)[lb]{\smash{\SetFigFont{10}{16.8}{\rmdefault}{\mddefault}{\updefault}$+$}}}

\put(2800,-1486){\makebox(0,0)[lb]{\smash{\SetFigFont{10}{16.8}{\rmdefault}{\mddefault}{\updefault}$u[n]$}}}
\put(3950,-1486){\makebox(0,0)[lb]{\smash{\SetFigFont{10}{16.8}{\rmdefault}{\mddefault}{\updefault}$w[n]$}}}
\put(6100,-2100){\makebox(0,0)[lb]{\smash{\SetFigFont{10}{16.8}{\rmdefault}{\mddefault}{\updefault}$q[n]$}}}
\put(5000,-1700){\makebox(0,0)[lb]{\smash{\SetFigFont{14}{16.8}{\rmdefault}{\mddefault}{\updefault}$L_G$}}}
\put(4800,400){\makebox(0,0)[lb]{\smash{\SetFigFont{14}{16.8}{\rmdefault}{\mddefault}{\updefault}$K_r(\cdot)$}}}

\end{picture}
\end{center}
\vspace*{0.2in}
\caption{Full State-Feedback Control Setup}
\vspace*{-.15in}
\label{figStateFeedback}
\end{figure}

%%%%%%%%%%%%%%%%
\subsection{The Bellman Inequality}

The solution to a well-posed state-feedback optimal control problem can be
characterized as the solution to the associated Bellman equation
\cite{PhDBib:Helton}-\nocite{PhDBib:Bertsekas1}\nocite{PhDBib:Bertsekas2}\cite{PhDBib:Bellman}. Herein, standard techniques are used to show that a
controller $K_r$ satisfying (\ref{optUcondition}) exists if and only
if a solution to an analog of the Bellman equation exists. The formulation
will be made more precise as follows. Define function $\sigma_{\gamma}:\mathbb{R}
^{m}\mathbb{\mapsto R}$ by
\begin{equation}
\sigma_{\gamma}\left(  x\right)  =\gamma - \phi\left(  Cx\right).
\label{gam_sig}
\end{equation}

It can be shown that a controller $K_r$ in \eqref{OptUK} guaranteeing \eqref{optUcondition} exists if and only if there exists a function $V:\mathbb{R}^{m}\mapsto\mathbb{R}_{+}$, such that inequality
\begin{equation}
V\left(  x\right)  \geq\sigma_{\gamma}\left(  x\right)  +\inf_{r\in\left[
-1,1\right]  }\max_{u\in U}V\left(  Ax+Br-Bu\right) \label{bellman}
\end{equation}
holds for all $x\in\mathbb{R}^{m}$ (see Theorem \ref{dpThmLB}). We refer to
inequality \eqref{bellman} as the Bellman inequality, and to a function $V$ satisfying \eqref{bellman} as the value function.

%%%%%%%%%%%%%%
\subsection{Numerical Solutions to the Bellman Inequality\label{NumSolnBell}}

In this section, we outline our approach for numerical computation of the value function $V$ and the control function $K_r$. 
We can simplify the problem of searching for a solution to inequality (\ref{bellman}) by instead finding a solution $V\ge0$ to the inequality
 \begin{equation}
V\left(  x\right)  \geq\sigma_{\gamma}\left(  x\right)  +\min_{r\in \Gamma_r  }\max_{u\in U}V\left(  Ax+Br-Bu\right),~\forall x\in\mathbb{R}^{m}\label{bellman2}%
\end{equation}

\noindent where $\Gamma_r$ is a finite subset of $\left[  -1,1\right]$. Since for every function $g:\left[  -1,1\right]\mapsto \mathbb{R}$, we have
 \begin{equation}
\inf_{r \in \left[  -1,1\right] } g\left(r\right) \leq \min_{r\in \Gamma_r}g\left(r\right),\label{infmin}%
\end{equation}

\noindent a solution $V$  of (\ref{bellman2}) is  also a solution of  (\ref{bellman}). In the remainder of this section we focus on finding a 
solution to (\ref{bellman2}).

A control invariant set of system (\ref{x+}), with respect to $\Gamma_r$, is formally defined as a subset
$\mathcal{I}\subset\mathbb{R}^{m}$ such that:
\begin{equation}
\forall x\in\mathcal{I},\text{ }\exists r\in\Gamma_r:~Ax+Br-Bu\in\mathcal{I},\text{~}\forall u\in U  .\label{CInvSet}
\end{equation}

Furthermore, a strong invariant set of system (\ref{x+}), with respect to $\Gamma_r$, is defined as a subset
$\mathcal{I}\subset\mathbb{R}^{m}$ such that:
\begin{equation}
Ax+Br-Bu\in\mathcal{I},\quad \forall x\in\mathcal{I},~ r\in\Gamma_r,~ u\in U  .\label{StrongCInvSet}
\end{equation}

%The case where $\max|eig(A)|<1$ is trivial. Since the reachable set $Q$ is bounded, every subset of $Q$ is also bounded. Thus, the set $M$ in \eqref{Vne0Bounded} is bounded.

Ideally we would like to have a bounded invariant set, so that the search for $V$ satisfying the Bellman inequality is restricted to a bounded region of the state space. If $\max|\operatorname{eig}(A)|<1$, then a bounded set $\mathcal{I}$ satisfying (\ref{StrongCInvSet}) is guaranteed to exist. However, if $\max|\operatorname{eig}(A)|=1$, then there is no bounded set $\mathcal{I}$ satisfying (\ref{CInvSet}), due to the assumption that the smallest element in the set $U$ is less than $-1$ and the largest element is greater than $1$. Hence, the case when $\max|\operatorname{eig}(A)|=1$ presents the challenge of searching for a numerical solution to (\ref{bellman2}) over an unbounded state space.  However, for the case that there is only one pole on the unit circle, we will establish in Theorem \ref{boundednessTHM} that the value function is zero for all $x$ outside a certain bounded region. Hence, the numerical search for $V$  satisfying $\left(  \ref{bellman2}\right)$ needs to be carried out only over a bounded subset of the state space.
%%%%%%%Note that we did not encounter this problem when searching for solutions to the corresponding Bellman inequality for design of the ADC and finding the upper bound in \cite{PhDBib:Mitra1}. Because, for the upper bound, the control was the output of the ADC $u$, which takes values strictly larger than $1$, thus even if $A$ has eigenvalues with magnitude equal to $1$, there will exist control that can overcome the input and keep the state space invariant. For the lower bound, the roles of $r$ and $u$ are reversed with $r$ being the control, which is strictly smaller than the largest value that $u$ can take, hence if $A$ has eigenvalues with unit magnitude, a control invariant set will not exist.
Next, uniform grids are created for the state space. In this paper, these are uniformly-spaced, discrete
subsets of the Euclidean space, and are defined as follows. The set 
\begin{equation*}
\mathbb{G}=\left\{  i\Delta~|~i\in\mathbb{Z}\right\}
\end{equation*}
is a grid on $\mathbb{R}$, where $D=1/\Delta$ is a positive integer. The
corresponding grid on $\mathcal{I}$ is 
\begin{equation*}
\Gamma  =\mathbb{G}^{m}\cap\mathcal{I}.
\end{equation*}

Furthermore, we define $ \Gamma_{r}=\{r_1,~r_2,\cdots,r_L\}$ as
\[
\Gamma_{r}   =\mathbb{G}\cap\left[  -1,1\right].
\]

The next step is to create a finite-dimensional parameterization of $V.$ In
this paper, the search is performed over the class of \textit{piecewise constant}
(PWC) functions assuming a constant value over a \textit{tile}. A
\textit{tile} in $\mathbb{G}^{n},$ $n\in\mathbb{N}$ is defined as the smallest
hypercube formed by $2^{n}$ points on the grid, and thus, has $2n$ faces (the
faces are hypercubes of dimension $n-1$). By convention, we assume that the
$n$ faces that contain the lexicographically smallest vertex are closed, and
the rest are open. The union of all such tiles covers $\mathbb{R}^{n}$ and
their intersection is empty. Let $T_{i}$ denote the $i^{th}$ tile over
the grid $\mathbb{G}^{m}$, and $\mathcal{T}$ the set of all tiles that %%%%%%fully
lie within $\mathcal{I},$ and $N_{T}$ the number of all such tiles:
\[
\mathcal{T=}\left\{  T_{i}~|~i\in\left\{  1,2,\cdots,N_{T}\right\}  \right\}.
\]
The PWC parameterization of $V$ is as follows
\begin{equation}
V\left(  x\right)  =V_{i},~\forall x\in T_{i},~i\in\left\{  1,2,\cdots
,N_{T}\right\} \label{PWMLV}
\end{equation}
where $V_{i}\in\mathbb{R}_{+}$. We then search for a solution $V:\mathcal{I}\mapsto \mathbb{R_+}$ of (\ref{bellman2}) for all $x\in\mathcal{I}$ within the class of PWC functions defined in (\ref{PWMLV}). The corresponding PWC control function $K_{r}:\mathcal{I}\mapsto \Gamma_r$ is given by%
\begin{equation}
K_{r}\left(  x\right)  =\arg\min_{r\in \Gamma_r}\max_{u\in U, \bar{x}\in T(x)}\hspace{-.05in}V\left(  A\bar{x}+Br-Bu\right)
,~\forall x\in \mathcal{I}.\label{GK}
\end{equation}

\noindent where $T(x)=T_i$ for $x \in T_i$. In the next subsection we show how to search and certify functions $V$ and $K_{r}$ satisfying (\ref{bellman2}) and (\ref{GK}).

%%%%%%%%%%%%%%%%%%%%%%%
\subsection{Searching for Numerical Solutions\label{Search4NumSol}}

The Bellman inequality $\left(  \ref{bellman2}\right)  $ is
solved via value iteration. The algorithm is initialized at $\Lambda
_{0}\left(  x\right)  =0$, for all $x\in\mathcal{T}$, and at stage $k+1$ it
computes a PWC function $\Lambda_{k+1}:\mathcal{T}%
\mapsto\mathbb{R}_{+}$ satisfying 
\begin{equation}
\Lambda_{k+1}\left(  x\right)  = \max\left\{  0,~\sigma_{\gamma}\left(  x\right)  +\min_{r\in\Gamma_{r}}%
\max_{u\in U, \bar{x}\in T(x)}\Lambda_{k}\left(  A\bar{x}+Br-Bu\right)  \right\}  . \label{Biterate}
\end{equation}

%\begin{multline}
%\Lambda_{k+1}\left(  x\right)  =\label{Biterate}\\
%\max\left\{  0,~\sigma_{\gamma}\left(  x\right)  +\min_{r\in\Gamma_{r}}%
%\max_{u\in U, \bar{x}\in T(x)}\Lambda_{k}\left(  A\bar{x}+Br-Bu\right)  \right\}  .
%\end{multline}

At each stage of the iteration, $\Lambda_{k+1}$ is computed and certified to satisfy \eqref{Biterate} for all $x \in \mathcal{T}$ as follows:
\begin{enumerate}
\item For every $i\in\left\{  1,2,\cdots,N_{T}\right\}$ and $j\in\left\{  1,2,\cdots,L\right\}$, define
\[
\sigma_{i} =\sup_{x\in T_{i}}\sigma_{\gamma}\left(  x\right),
\]
\[
Y_{ij} =\left\{  Ax+Br_{j}-Bu~|~x\in T_{i},~r_j\in  \Gamma_r, ~ u\in U\right\}  ,
\]

and find all the tiles that intersect with $Y_{ij}$
\[
\Theta_{ij}=\left\{ p~|~T_{p}\cap Y_{ij}\neq\left\{  \emptyset\right\}
,~p\in\left\{  1,2,\cdots,N_{T}\right\}  \right\}  .
\]

\item Let
\[
v_{s} =\Lambda_{k}\left(  x\right)  ,~x\in T_{s},~s\in\left\{  1,2,\cdots,N_{T}\right\}.\\
\]
Compute
\begin{equation*}
v_{ij} =\max_{s\in\Theta_{ij}}v_{s}.
\end{equation*}

\item For every tile $x\in T_{i} $ compute PWC functions:
\begin{align*}
\Lambda_{k+1}\left(x\right) & =\max\left\{  0,~\sigma_{i} +\min_{j}v_{ij}  \right\}.
\end{align*}
\end{enumerate}

When the iteration converges, it converges pointwise to a limit
$\Lambda:\mathcal{T}\mapsto\mathbb{R}_{\mathbb{+}},$ where the limit
satisfies, for all $x\in\mathcal{T},$ the equality
\begin{equation}
\Lambda\left(  x\right)  = \max\left\{  0,~\sigma_{\gamma}\left(  x\right)
+\min_{r\in\Gamma_{r}}\max_{u\in U, \bar{x}\in T(x)}\Lambda\left(  A\bar{x}+Br-Bu\right)  \right\}.
\label{Bell_Limit}
\end{equation}

%\Lambda\left(  x\right)  =\\
%\max\left\{  0,~\sigma_{\gamma}\left(  x\right)
%+\min_{r\in\Gamma_{r}}\max_{u\in U, \bar{x}\in T(x)}\Lambda\left(  A\bar{x}+Br-Bu\right)  \right\}.
%\label{Bell_Limit}
%\end{multline}

The largest $\gamma$ for which (\ref{Biterate}) converges is found through
line search. We take $V\left(  x\right)  =\Lambda\left(  x\right)  ,$\ for all
$x\in\mathcal{T}.$ The associated suboptimal control law is a PWC function defined over all tiles $T_i$ in the control invariant set $\mathcal{I}$ that satisfies \eqref{GK}.

%%%%%%%%%%%%%%%%%%%%%%%%%%%%%%%%%%%%%%%%%%%

\section{Theoretical Statements\label{TexSta}}

In this section, we show that under some technical assumptions, the value function in (\ref{bellman}) is zero beyond a bounded region. However, we first present a theorem that establishes the link between the full information feedback control problem and the Bellman inequality (\ref{bellman}). Note that in this section we use subscript notation for values of sequences at specific time instances instead of the bracket notion used elsewhere in the paper, that is $x_n$ is used in place of $x[n]$.
\bigskip

\begin{theorem}
\label{dpThmLB} Let $X$ be a metric space, $\Omega \subset \mathbb{R}$ be a compact metric space, $U \subset \mathbb{R}$ be a finite set, and $f:X \times \Omega \times U \mapsto X$ and $\sigma:X\mapsto\mathbb{R}$ be continuous functions. Then the following statements are equivalent:

\begin{enumerate}
\item[(i)] 
\begin{equation}
V_\infty(\bar{x})  \;  {\buildrel\rm def\over =}\; \sup_{\tau \in \mathbb{Z}_+} V_\tau(\bar{x}) < \infty, \quad \forall  \bar{x} \in X, \label{Vinf}
\end{equation}
\noindent where $V_\tau: X \mapsto \mathbb{R}_+ $ is defined by
\begin{equation}
V_\tau(\bar{x})= \max_{\theta_0}\min_{r_0} \max_{u_0, \theta_1} \cdots \min_{r_{\tau-2}} \max_{u_{\tau-2}, \theta_{\tau-1}} \sum^{\tau-1}_{n=0} h_{n+1} \sigma(x_n), \label{Vtau}
\end{equation}

\noindent with $r_n,~u_n,~ \theta_n$ restricted by $r_n \in \Omega,~ u_n \in U,~\theta_n \in \{0,1\}$ and $x_n,~h_n$ defined by
\begin{align}
x_{n+1} & =f(x_n,r_n,u_n),  \quad x_0 = \bar{x}, \quad \forall n \in \mathbb{Z}_+ \label{xn+1}\\
h_{n+1} & =\theta_n h_n,  \quad h_0 =1, \quad \forall n \in \mathbb{Z}_+.
\end{align}

\item[(ii)]The sequence of functions $\Lambda_{k}:X\mapsto\mathbb{R}_{+}$ defined by 
\begin{align}
\Lambda_{0}\left(  x\right)  & \equiv 0 \notag \\
\Lambda_{k+1}\left(  x\right)  & = \max\left\{  0,~\sigma\left(  x\right)  +\min_{r\in \Omega }\max_{u\in U}\Lambda_{k}\left(f(x,r,u)\right)  \right\} \label{FuzzyLamma}
\end{align}
converges pointwise to a limit $\Lambda_\infty: X\mapsto \mathbb{R}_{\mathbb{+}}.$

\item[(iii)] There exists a function $V:X \mapsto \mathbb{R}_{+}$ such that 
\begin{equation}
V(x)=\max \left\{0, ~\sigma(x) + \min_{r \in \Omega} \max_{u \in U} V(f(x,r,u))\right\} \label{BellmanEquality}
\end{equation}
for every $x\in X$.

\item[(iv)] There exists a function $V:X\mapsto\mathbb{R}_{+}$ such that 
\begin{equation}
V(x) \ge \sigma(x) + \min_{r \in \Omega}\max_{u \in U} V(f(x,r,u)), \quad \forall x\in X. \label{BelleIneq}
\end{equation}

\end{enumerate}

\noindent Moreover, if conditions (i)$-$(iv) hold, then $V_\infty$ is a solution of \eqref{BellmanEquality} and
\begin{align}
V_\infty = \Lambda_\infty & \ge V_k = \Lambda_k, \quad \forall k \in \mathbb{Z}_+ \label{bison}\\
V & \ge V_\infty. \label{buffalo}
\end{align}
for $V$ satisfying (iii). Furthermore,  for every $x_n$ satisfying \eqref{xn+1},
\begin{equation}
 \sup_{\tau} \sum_{n=0}^{\tau-1} \sigma(x_n) < \infty . \label{yak}
\end{equation}
\end{theorem}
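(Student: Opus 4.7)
My plan is to prove the cycle $(i)\!\Rightarrow\!(ii)\!\Rightarrow\!(iii)\!\Rightarrow\!(iv)\!\Rightarrow\!(i)$ and extract the ``moreover'' statements along the way. Stripping off the outer $\max_{\theta_0}$ in \eqref{Vtau} splits into the cases $\theta_0=0$ (in which every $h_n=0$, so the sum is $0$) and $\theta_0=1$ (in which case the remaining $\tau-1$ levels of max-min with weights $h'_{n+1}=\theta_1\cdots\theta_n$ are exactly what defines $V_{\tau-1}(f(\bar x,r_0,u_0))$, preceded by the term $\sigma(\bar x)$). This yields
\[
V_\tau(x)=\max\Bigl\{0,\ \sigma(x)+\min_{r\in\Omega}\max_{u\in U}V_{\tau-1}(f(x,r,u))\Bigr\},\ \ V_0\equiv 0,
\]
which is precisely \eqref{FuzzyLamma}, so an immediate induction gives $V_k=\Lambda_k$ for all $k$. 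The same splitting at the innermost level (the max player can pick $\theta_{\tau-1}=0$, killing the last term) shows $V_\tau\geq V_{\tau-1}$, so the $V_\tau$ are monotone non-decreasing and $V_\infty=\sup_\tau V_\tau=\lim_k\Lambda_k$. Consequently $(i)\!\Leftrightarrow\!(ii)$ and \eqref{bison} holds.

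For $(ii)\!\Rightarrow\!(iii)$, a short induction (using continuity of $\sigma$ and $f$, finiteness of $U$, and compactness of $\Omega$, together with the standard fact that the min over a compact parameter of a jointly continuous function is continuous) shows that every $\Lambda_k$ is continuous on $X$. Passing to the limit in \eqref{FuzzyLamma} requires interchanging the pointwise monotone limit with $\min_{r\in\Omega}\max_{u\in U}$. The max over the finite $U$ commutes with the limit, so with $g_k(r):=\max_u\Lambda_k(f(x,r,u))$ and $g(r):=\max_u\Lambda_\infty(f(x,r,u))$ one has continuous $g_k\uparrow g$ on the compact $\Omega$. The sequence $\min_r g_k(r)$ is non-decreasing and bounded above by $\min_r g(r)$; for the reverse inequality, pick minimizers $r_k^*\in\arg\min g_k$, extract a convergent subsequence $r_{k_j}^*\to r^*$, and for any fixed $k_0$ use $g_{k_0}(r_{k_j}^*)\leq g_{k_j}(r_{k_j}^*)=\min g_{k_j}$ whenever $k_j\geq k_0$; letting $j\to\infty$ (continuity of $g_{k_0}$) then $k_0\to\infty$ yields $g(r^*)\leq\lim_k\min g_k$, hence $\min g_k\to\min g$. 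Substituting into \eqref{FuzzyLamma} gives \eqref{BellmanEquality} with $V=\Lambda_\infty=V_\infty$.

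The implication $(iii)\!\Rightarrow\!(iv)$ is trivial. For $(iv)\!\Rightarrow\!(i)$, note that any $V\geq 0$ satisfying \eqref{BelleIneq} automatically satisfies $V(x)\geq \max\{0,\sigma(x)+\min_r\max_u V(f(x,r,u))\}$; an induction on $k$, exploiting monotonicity of the right-hand side in its $V$-argument, yields $V\geq \Lambda_k$ for every $k$, so $V_\infty=\sup_k\Lambda_k\leq V<\infty$, giving (i) and \eqref{buffalo}. Finally, for \eqref{yak} I would fix a summable sequence $\epsilon_n>0$ and construct $r_n$ online as an $\epsilon_n$-near-minimizer of $r\mapsto\max_{u\in U}V_\infty(f(x_n,r,u))$; the Bellman equation for $V_\infty$ then delivers $V_\infty(x_n)\geq\sigma(x_n)+V_\infty(x_{n+1})-\epsilon_n$ for every $u_n\in U$, and telescoping with $V_\infty\geq 0$ gives $\sum_{n=0}^{\tau-1}\sigma(x_n)\leq V_\infty(\bar x)+\sum_n\epsilon_n<\infty$ uniformly in $\tau$.

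The main obstacle I anticipate is the $\min$-limit interchange in the $(ii)\!\Rightarrow\!(iii)$ step: unlike the $\max$ over the finite set $U$, the $\min$ over the compact $\Omega$ does not automatically commute with a pointwise monotone limit, and the argument requires both an inductive verification of continuity of every $\Lambda_k$ and a careful compactness extraction. Everything else is standard dynamic-programming manipulation.
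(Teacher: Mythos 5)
Your proposal is correct, and its skeleton coincides with the paper's: identify $V_\tau$ with the value-iteration sequence via the recursion $V_\tau(x)=\max\{0,\,\sigma(x)+\min_{r\in\Omega}\max_{u\in U}V_{\tau-1}(f(x,r,u))\}$, invoke monotone convergence for (i)$\Leftrightarrow$(ii) and \eqref{bison}, pass to the limit with a compactness interchange for the Bellman equality, and note (iii)$\Rightarrow$(iv) is trivial. You diverge in three places, each to your advantage. First, for (iv)$\Rightarrow$(i) the paper unrolls the inequality along game trajectories, threading the $\theta$-bookkeeping through $k$ steps to bound the partial game values $S(x_0)\le V(x_0)<\infty$; you instead run the comparison induction $V\ge FV\ge F\Lambda_k=\Lambda_{k+1}$ using monotonicity of the Bellman operator $F$, which is shorter and makes (iv)$\Rightarrow$(i) and \eqref{buffalo} literally the same induction --- the paper performs that very induction, but only afterwards and only to prove \eqref{buffalo}. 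Second, where the paper merely cites ``a well-known theorem from Analysis'' for the interchange $\lim_k\min_{r\in\Omega}g_k(r)=\min_{r\in\Omega}\lim_k g_k(r)$, you supply the two ingredients that citation tacitly requires: the induction showing every $\Lambda_k$ is continuous (never stated in the paper, though its quoted lemma assumes continuous $g_k$), and the subsequence-extraction proof of the Dini-type interchange; your caution is warranted precisely because $\Lambda_\infty$ need not be continuous, so the limit min is not automatic. Third, your proof of \eqref{yak} via summable $\epsilon_n$-near-minimizers and telescoping $V_\infty(x_n)\ge\sigma(x_n)+V_\infty(x_{n+1})-\epsilon_n$ is a complete argument where the paper offers only the one-line remark about ``substituting the argument of minimums and maximums''; using near-minimizers is exactly right, since $V_\infty$ may be discontinuous and exact minimizers of $r\mapsto\max_{u\in U}V_\infty(f(x,r,u))$ need not exist, and your construction also makes precise the theorem's loose phrasing ``for every $x_n$ satisfying \eqref{xn+1}'', which can only mean trajectories generated by such a (near-)optimal feedback in $r$ against arbitrary $u$. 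A minor further difference: you get monotonicity of $V_\tau$ game-theoretically (the maximizing player sets $\theta_{\tau-1}=0$, after which $r_{\tau-2},u_{\tau-2}$ no longer affect the objective) rather than by the paper's Observation \ref{monotonicity} induction; both are valid. What the paper's trajectory-unrolling route buys in exchange is the explicit $g_j$/argmax machinery that its proof of Theorem \ref{boundednessTHM} later reuses; your route buys brevity and rigor at the points the paper leaves implicit.
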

\medskip

\begin{proof}
Please see the Appendix.
\end{proof}

\bigskip

%\begin{definition}
%The reachable set $\mathcal{R}$ of system \eqref{x+} is:
%\begin{multline}
%\mathcal{R} = \left \{x_t \in \mathbb{R}^m : x_{n+1}=Ax_n +Br_n -Bu_n,~ x_0=0, ~ \right.\\
%\left. r_n \in [-1,1], ~ u_n \in U,~ t>0\right\}.
%\end{multline}
%
%\end{definition}
%
%\bigskip

\begin{definition}
For $v \in \mathbb{R}^m\backslash \{ 0\}$, a \textit{cylinder with axis $v$} is a set of the form:
\begin{equation}
\mathcal{C}_{Q,\beta}(v) = \left\{ p\in \mathbb{R}^m :  \inf_{t \in \mathbb{R}} (p- tv)^TQ(p- tv) \le \beta \right\}
\end{equation}
\noindent where $Q \in \mathbb{R}^{m \times m}$, $Q = Q' > 0$, and $\beta > 0$.
\end{definition}
\medskip
\begin{remark}
A cylinder that is an invariant set for system \eqref{x+} is called an\textit{ invariant cylinder. }
\end{remark}
\bigskip

%Clearly, the invariant cylinder $\mathcal{C}_Q(e_1)$ of system \eqref{x+} contains its reachable set $\mathcal{R}$. 
%Furthermore, the set $E(e)=\{e'\zeta:\zeta \in \mathcal{C}_{Q,\beta}(e_1)\}$ is bounded for all $e \in \mathbb R^m$ orthogonal to $e_1$. Thus, 
The following theorem establishes that the value function is zero for all $x$ outside a certain bounded region.
\bigskip

%Since matrix $A$ has exactly one eigenvalue on the unit circle, with all other eigenvalues strictly inside the unit circle, the set $E(e)=\{e'\zeta:\zeta \in \mathcal{R}\}$ is bounded for every $e \ne ce_1$, where $c \in \mathbb{R}\backslash\{0\}$, and unbounded otherwise. That is, there exists a $\tilde{\beta} \in \mathbb{R}$ such that $\mathcal{R} \subset \mathcal{C}_{\tilde{\beta}}(e_1)$. 

\begin{theorem}
\label{boundednessTHM}
Let $U \subset \mathbb{R}$ be a fixed finite set. Consider the system defined by equation \eqref{x+}, where $x \in \ell_+(\mathbb{R}^m), ~r \in \ell_+([-1,1]), ~u \in \ell_+(U)$, and the pair $(A,B)$ is controllable. Suppose that $A$ has exactly one eigenvalue on the unit circle. Let $e_{1}$ denote the eigenvector corresponding 
to the eigenvalue of $A$ that is on the unit circle. Let  $\beta>0$ and $Q \in \mathbb{R}^{m \times m}$, $Q = Q' > 0$ be such that $\mathcal{C}_{Q,\beta}(e_1)$ is an invariant cylinder for system \eqref{x+}. Let $V$ be defined by \eqref{Vinf} and $\sigma$ be BIBO. If the set 
\begin{equation}
S_0=\{x\in \mathcal{C}_{Q,\beta}(e_1) :\sigma(x)>-\epsilon_0\}
\end{equation}
is bounded for some $\epsilon_0 > 0$, then the set 
\begin{equation}
M = \{x \in \mathcal{C}_{Q,\beta}(e_1):V(x)\neq0\} \label{Vne0Bounded}
\end{equation}
is also bounded.
\end{theorem}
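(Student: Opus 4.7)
My plan is to exhibit a finite $R^{\ast}$ such that $V(x)=0$ for every $x\in\mathcal{C}_{Q,\beta}(e_1)$ with $|v_1^T x|>R^{\ast}$, where $v_1\in\mathbb{R}^m$ denotes a left eigenvector of $A$ satisfying $v_1^T A=\lambda v_1^T$ for the unit-circle eigenvalue $\lambda$. Since $\mathcal{C}_{Q,\beta}(e_1)$ is bounded in every direction transverse to $e_1$, such an axial bound immediately yields the boundedness of $M$.

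The first (kinematic) step is to bound the one-step growth of the axial coordinate $t_n:=v_1^T x_n$. Controllability of $(A,B)$ forces $b:=v_1^T B\neq 0$ by the PBH test, and \eqref{x+} gives $t_{n+1}=\lambda t_n+b(r_n-u_n)$. Since $|\lambda|=1$, $\bigl||t_{n+1}|-|t_n|\bigr|\le C$ with $C:=|b|\bigl(1+\max_{u\in U}|u|\bigr)$. Boundedness of $S_0$ provides $T_0:=\sup\{|v_1^T x|:x\in S_0\}<\infty$, so any $x\in\mathcal{C}_{Q,\beta}(e_1)$ with $|v_1^T x|>T_0$ necessarily lies outside $S_0$ and therefore has $\sigma(x)\le-\epsilon_0$.

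The second step invokes the Bellman equality from Theorem~\ref{dpThmLB}(iii): $V(x)=\max\{0,\sigma(x)+\min_r\max_u V(f(x,r,u))\}$. Whenever $V(x)>0$ and $x\notin S_0$, this rearranges to $\min_r\max_u V(f(x,r,u))\ge V(x)+\epsilon_0$, so for each controller choice $r\in[-1,1]$ the adversary can respond with some $u\in U$ achieving $V(f(x,r,u))\ge V(x)+\epsilon_0$. Iterating this worst-case response produces a trajectory $x_0,x_1,\dots$ along which $V(x_{n+1})\ge V(x_n)+\epsilon_0$ for every $n$ less than the first hitting time $N$ of $S_0$; hence $V(x_N)\ge V(x_0)+N\epsilon_0\ge N\epsilon_0$.

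The decisive --- and I expect hardest --- technical step is the a priori bound $V^{\ast}:=\sup_{x\in\overline{S_0}}V(x)<\infty$, together with a proof that $N<\infty$ along the trajectory just constructed. I would extract both from a subsidiary lemma combining (i) the continuity of each $V_\tau$ in \eqref{Vtau} as a finite min--max of continuous maps over the compact set $[-1,1]$ and the finite set $U$; (ii) the monotone convergence $V_\tau\uparrow V<\infty$ supplied by the hypothesis; (iii) compactness of $\overline{S_0}$; and (iv) the strong invariant cylinder property together with BIBO of $\sigma$, which together allow the controller to force every trajectory starting in $\overline{S_0}$ out of $S_0$ within a uniformly bounded number of steps, thereby capping the positive game value attainable on $\overline{S_0}$ and simultaneously ruling out $N=\infty$. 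Granting this, $N\le V^{\ast}/\epsilon_0$, and the kinematic estimate delivers $|v_1^T x_0|\le T_0+NC\le R^{\ast}:=T_0+CV^{\ast}/\epsilon_0$. Consequently $M\subseteq\{x\in\mathcal{C}_{Q,\beta}(e_1):|v_1^T x|\le R^{\ast}\}$, a bounded subset of $\mathbb{R}^m$, as required.
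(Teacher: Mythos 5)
Your kinematic step is sound ($v_1^TB\neq0$ by PBH, the one-step axial bound, and the reduction of boundedness of $M$ to an axial bound --- though you should note that $v_1^Te_1\neq0$, which holds because the unit-circle eigenvalue is simple, is needed for the slab $\{|v_1^Tx|\le R^*\}$ to cut the cylinder in a bounded set), and your iteration $V(x_{n+1})\ge V(x_n)+\epsilon_0$ off $S_0$ is a correct reading of the Bellman equality in Theorem \ref{dpThmLB}(iii). The genuine gap sits exactly where you predicted, and your proposed subsidiary lemma does not close it. Items (i)--(iii) of that lemma cannot yield $V^*=\sup_{x\in\overline{S_0}}V(x)<\infty$: an increasing, pointwise-finite limit of continuous functions is merely lower semicontinuous, and a lower semicontinuous function can be unbounded on a compact set (Dini's theorem requires continuity of the \emph{limit}, which is precisely what is unknown here). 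Item (iv) aims at the wrong mechanism: forcing the state out of $S_0$ quickly does not cap the game value, because the maximizing player collects positive running cost only inside $S_0$ and can stop accumulation via $\theta$; what must be excluded is unbounded accumulation over repeated visits to $S_0$, and for that the minimizing player needs more than an exit strategy. The paper's actual mechanism uses controllability constructively: Lemma \ref{finiteX} builds an $m$-step input $\rho(x_0)=-L_c^{-1}(A^mx_0-\tilde\xi)$ steering every $x_0$ in a bounded set into a fixed \emph{finite} set $\widetilde X$ \emph{regardless of the adversary's} $u$-sequence, and Lemma \ref{supVexists} then bounds $V_\infty(x_0)$ by a bounded $m$-step running cost (BIBO of $\sigma$) plus $\max_{\widetilde X}V_\infty$, upgrading the pointwise finiteness in \eqref{Vinf} to the BIBO property. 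Without this, or an equivalent, $V^*<\infty$ is unsupported.

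The second hole is your $N=\infty$ branch. If the constructed trajectory never reaches $S_0$, you obtain $V(x_n)\ge n\epsilon_0$ with every individual value finite --- no contradiction, unless you already have a uniform bound on $V$ over the reachable tube inside the cylinder, which is essentially the conclusion being proved. The paper never runs an infinite trajectory: it fixes an $\epsilon$-optimal horizon $\widetilde\tau(\bar x)\in\arg^\epsilon\sup_{\tau}V_\tau(\bar x)$, extracts near-optimal $r,u,\theta$ from the finite min--max in \eqref{Vtau}, and argues by dichotomy \emph{within that horizon}: either the play avoids $S_0$ throughout, so every stage cost is $\le-\epsilon_0$ and $V_\infty(\bar x)<\epsilon$; or it first enters $S_0$ at some $J>L>M_0/\epsilon_0$ (possible only after $L$ steps since $\bar x\in S_L$), whence $g_0(\bar x,1)\le -J\epsilon_0+M_0<0$ and again $V_\infty(\bar x)<\epsilon$. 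If you repair your argument honestly --- say, by proving $V(x)\le\max\left\{0,\;V^*-\lceil(|v_1^Tx|-T_0)/C\rceil\,\epsilon_0\right\}$ directly --- you will find yourself rederiving exactly this finite-horizon estimate, at which point the trajectory iteration becomes redundant. So keep your $v_1$-coordinate bound, which is a cleaner quantitative version of the paper's one-line claim that $S_L^c$ is bounded, but replace the subsidiary lemma by the substance of Lemmas \ref{finiteX}--\ref{supVexists} and the finite-horizon two-case analysis.
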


\begin{proof}
Please see the Appendix.
\end{proof}

%%Observability of $(C,A)$ guarantees that for every eigenvector $e_{i}$ of
%%$A\in\mathbb{R}^{m\times m}$, $Ce_{i}\neq0$ for all $i\in\{1,~\cdots,~m\}$.
%%Thus, $\phi(q)\neq0$ and consequently $\sigma(x) \neq 0$ for every $x$ with a nonzero component in the direction
%%of $e_{1}$.
%%

%%\begin{definition}
%%Let $\mathcal{S}(A,B,C,x_0)$ denote the set of all quadruplet of sequences $(x,r,u,q)$, where $x:\mathbb{Z}_+ \mapsto \mathbb{R}^m, ~r:\mathbb{Z}_+ \mapsto \mathbb{R}, ~u:\mathbb{Z}_+ \mapsto \mathbb{R}, ~q:\mathbb{Z}_+ \mapsto \mathbb{R}$ satisfying a controllable, minimum-phase LTI system described by
%%\begin{align}
%%x[n+1] &= A x[n] + Br[n]-Bu[n], \quad x[0] = x_0 \label{x+2}\\
%%q[n] &= Cx[n]. \label{q2} 
%%\end{align}
%%\end{definition}

%%%%%%%%%%%%%%%%%%%%%%%%%%%%%%%%%%%%%%%%%%%%%%%%%%%%%%%%%%%%%%%%%%%%%%%%%%%%%%%%
\section{NUMERICAL EXAMPLE\label{NumEx}}

Consider the example in \cite{PhDBib:Mitra1}, were the dynamical system $L_{G}$ \eqref{LG0} has transfer function
\[
H(z)=\frac{z+1}{z(z-1)}.
\]

%and state space representation
%\begin{align}
%x_1[n+1] &= x_1[n] + r[n] - u[n] \\
%x_2[n+1] &= x_1[n] \\
%q[n] &= x_1[n] + x_2[n].
%\end{align}
%	\[
%	A=\left[
%	\begin{array}
%	[c]{cc}%
%	1 & 0\\
%	1 & 0
%	\end{array}
%	\right]  ,~B=\left[
%	\begin{array}
%	[c]{c}%
%	1\\
%	0
%	\end{array}
%	\right]  ,~C=\left[
%	\begin{array}
%	[c]{cc}%
%	1 & 1
%	\end{array}
%	\right]  .
%	\]

Let $U=\left\{  -1.5,0,1.5\right\}$, $\phi(x)=\left\vert
Cx\right\vert $, and $x= \begin{bmatrix} x_1 & x_2\end{bmatrix}^T.$ From \cite{PhDBib:Mitra1}, the strong invariant set $\mathcal{I}$ is given by \vspace{-.05in}
\begin{equation}
\mathcal{I} =\{x\in \mathbb{R}^2~ : ~\left\vert x_1 - x_2 \right\vert \leq2.5\}.\label{zedonk}%
\end{equation}
Due to the pole at $z=1$, the strong invariant set $\mathcal{I}$ given by \eqref{zedonk} is unbounded and defines an infinite strip in $\mathbb{R}^2$. However, according to Theorem \ref{boundednessTHM} we need to search for $V(x)$ only inside a bounded region within this infinite strip, since $V(x)=0$ for all $x$ outside a certain bounded region. The bounded region is found via trial and error. We select a grid spacing of $\Delta=1/64$. Following the procedures outlined in subsections \ref{NumSolnBell} and \ref{Search4NumSol}, the
largest $\gamma$ for which the iteration in (\ref{Biterate}) converges to the
limit $\Lambda$ in (\ref{Bell_Limit}), is $\gamma=0.925$, which is a certified lower bound on the performance of any arbitrary ADC with respect to the specific performance measure selected. Figures \ref{VLB}, \ref{Vslice}, and \ref{levelSet0} show the value function $V$, the cross section of $V$%the value function along the line $x_1=-x_2$
, and the zero level set of $V$%the value function
, respectively. Figures \ref{Kr} and \ref{SliceKr} show the control function and its cross section%the cross section of the control function along the line $x_1=x_2$
, respectively. The certified upper bound for the performance of the ADC designed in \cite{PhDBib:Mitra1} with respect to the same performance criteria is 1.1875. 

%Fig. \ref{SliceKr} shows the cross section of the control function along the line $x_1=x_2$. Utilizing the optimized feedback law for the input of the ADC $K_r(x)$ in conjunction with the optimized control law for the output of the ADC $K(x,r)$, from [19], with initial condition $x[0]=0$, yields a specific state trajectory and an associated worst case input sequence depicted in Fig. \ref{Badr}.
%%%%% %%%%%%It is interesting to note that the value function for the lower bound is very different from the value function for the upper bound (Fig. \ref{VUB}). 

\begin{figure}[h]
\centering
\includegraphics[scale=.35]{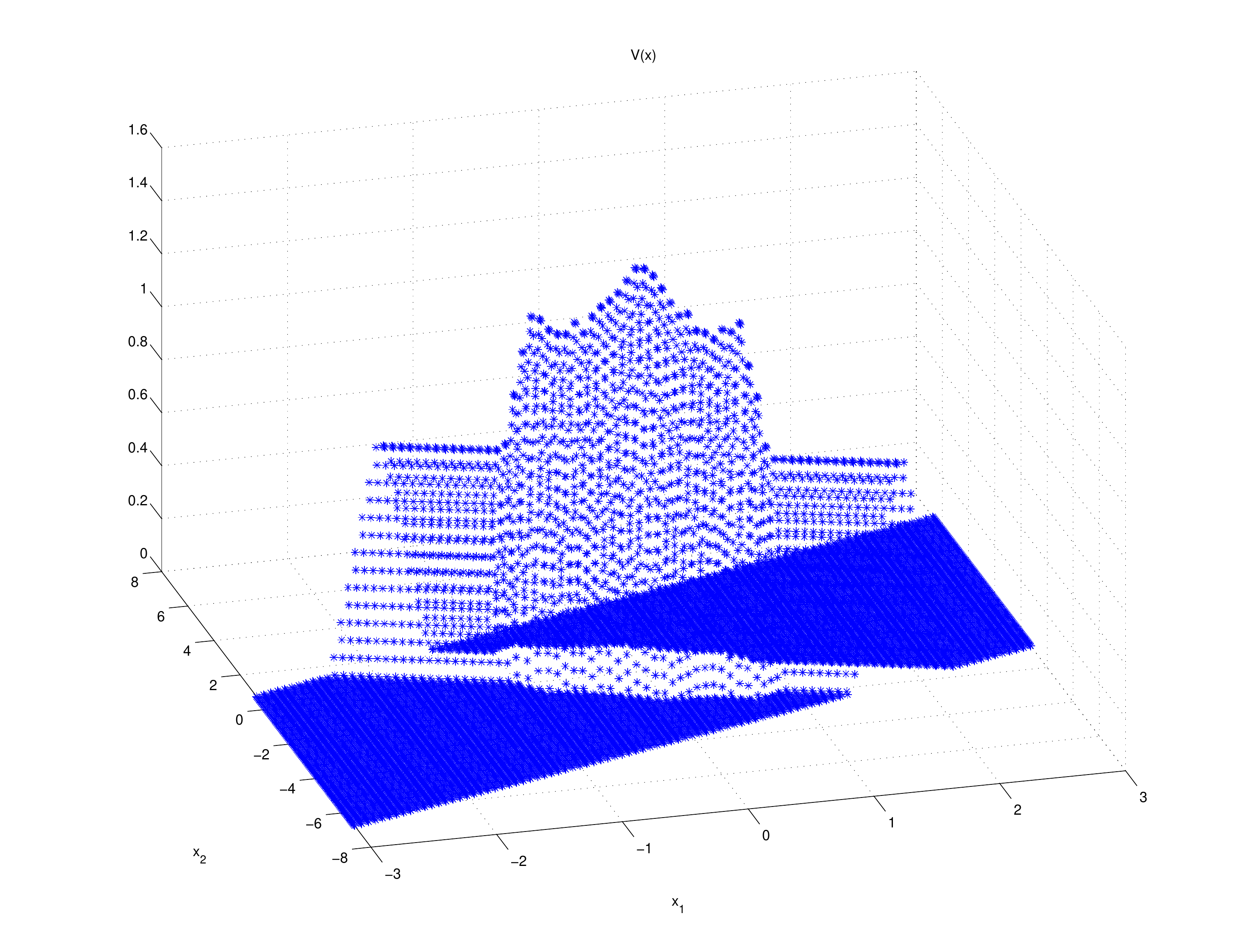}
\caption{Value Function $V(x)$ for Lower Bound}
\label{VLB}
\end{figure}

\begin{figure}[h]
\subfloat[Cross Section of $V(x)$ along $x_1 = -x_2$]{\includegraphics[scale=.18]{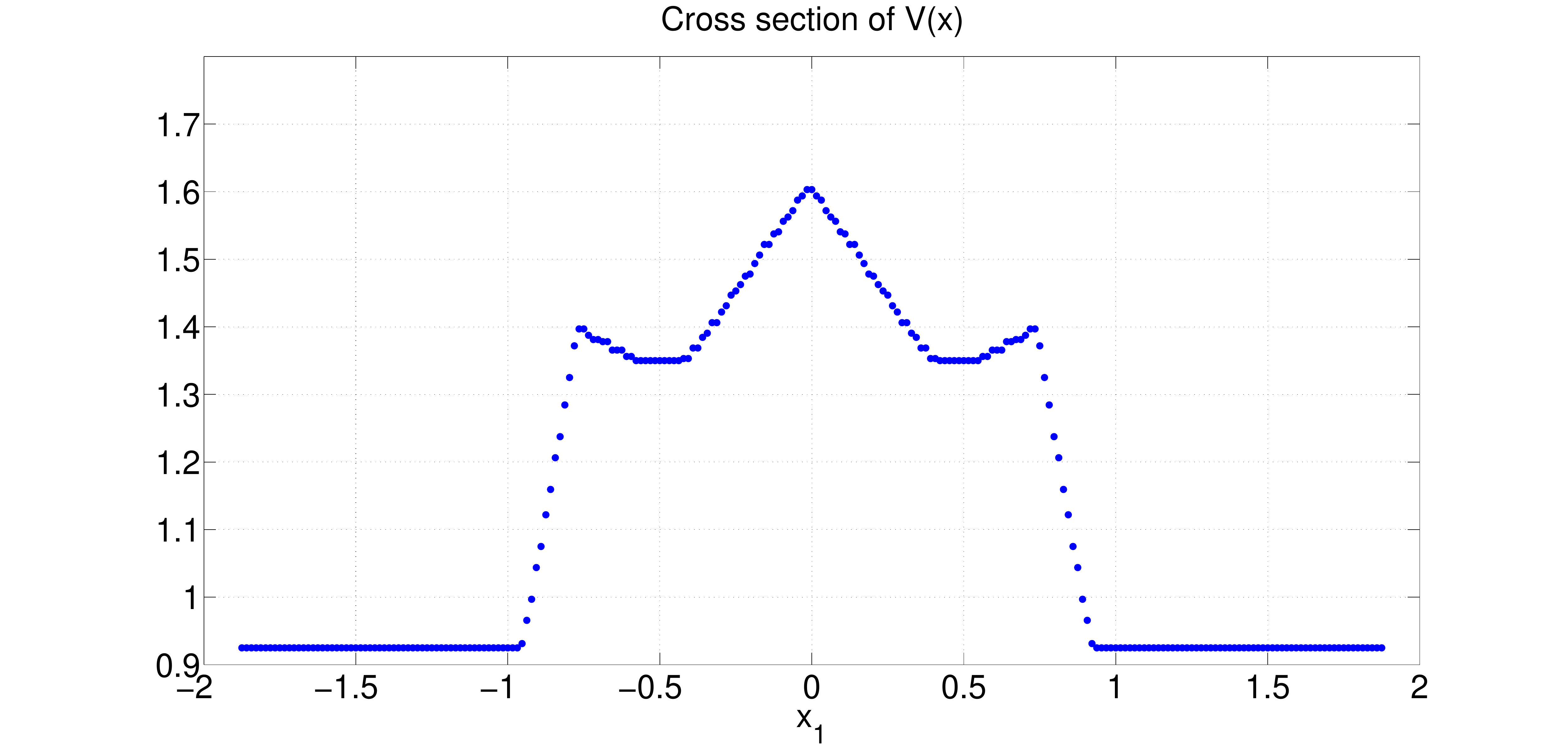} \label{Vslice}}
\subfloat[Zero Level Set of $V(x)$] {\includegraphics[scale=.135]{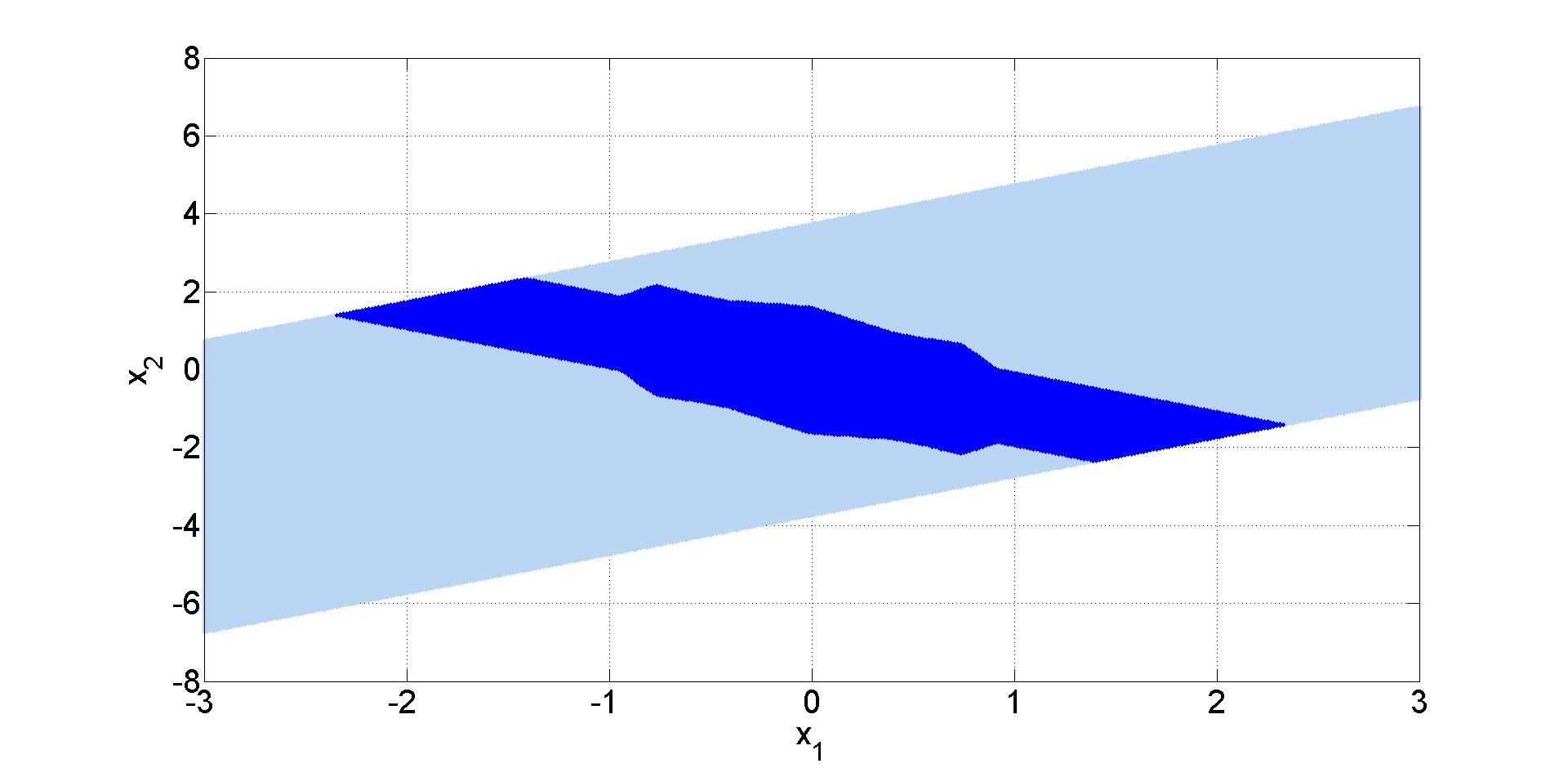} \label{levelSet0}}
\caption{Value Function $V(x)$}
\end{figure}

%\begin{figure}[h]
%\vspace{.15in}
%\includegraphics[scale=.145]{CrossSectionV.pdf}   
%\caption{Cross Section of $V(x)$ along $x_1 = -x_2$}
%\label{Vslice}
%\end{figure}
%
%\begin{figure}[h]
%
%\includegraphics[scale=.11]{ZeroLevelSetV2.jpg}   
%\caption{Zero Level Set of $V(x)$}
%\label{levelSet0}
%\end{figure}

%\begin{figure}[h]
%\includegraphics[scale=.24]{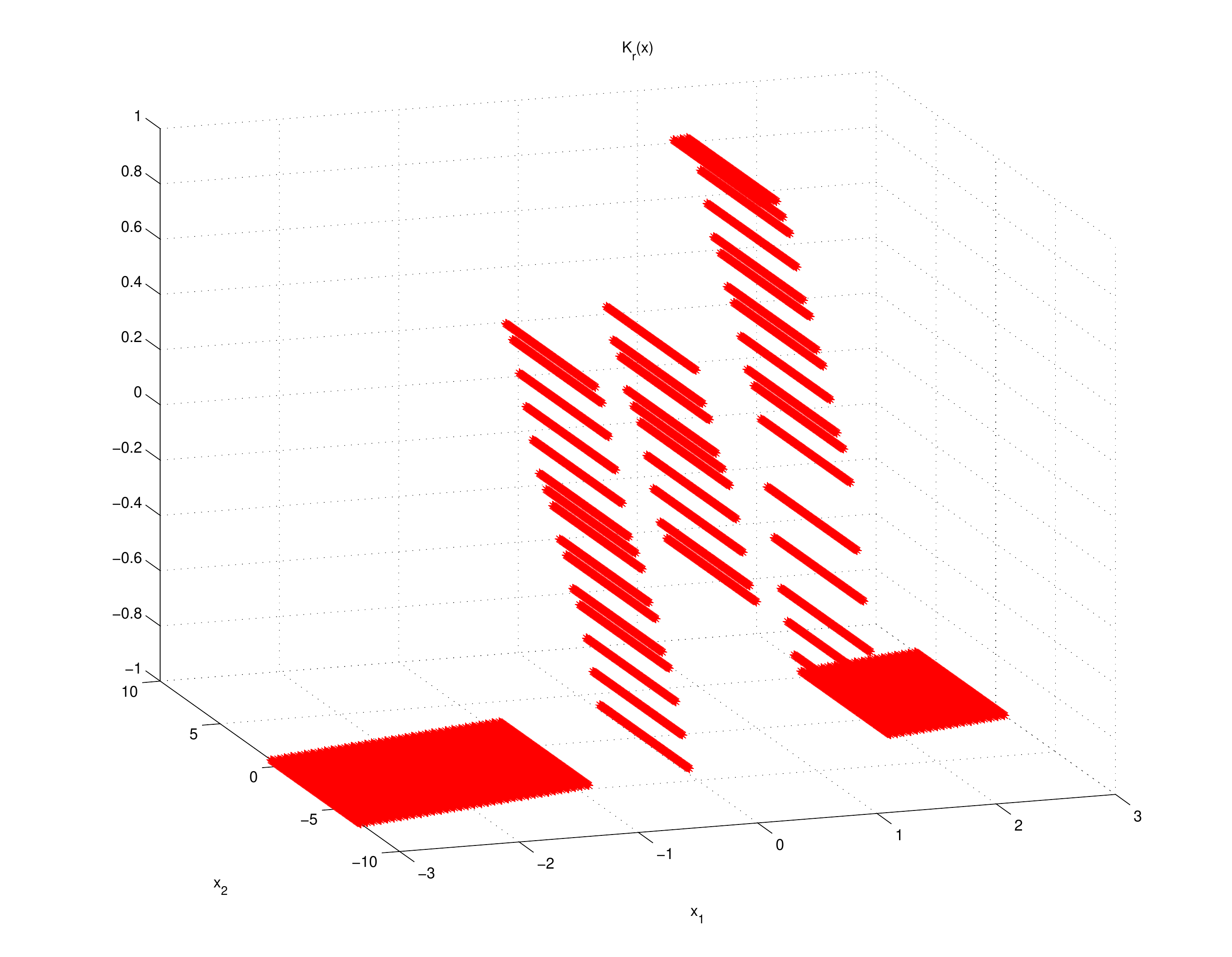}   
%\caption{Control $K_r(x)$ for Lower Bound}
%\label{Kr}
%\vspace{.1in}
%\includegraphics[scale=.15]{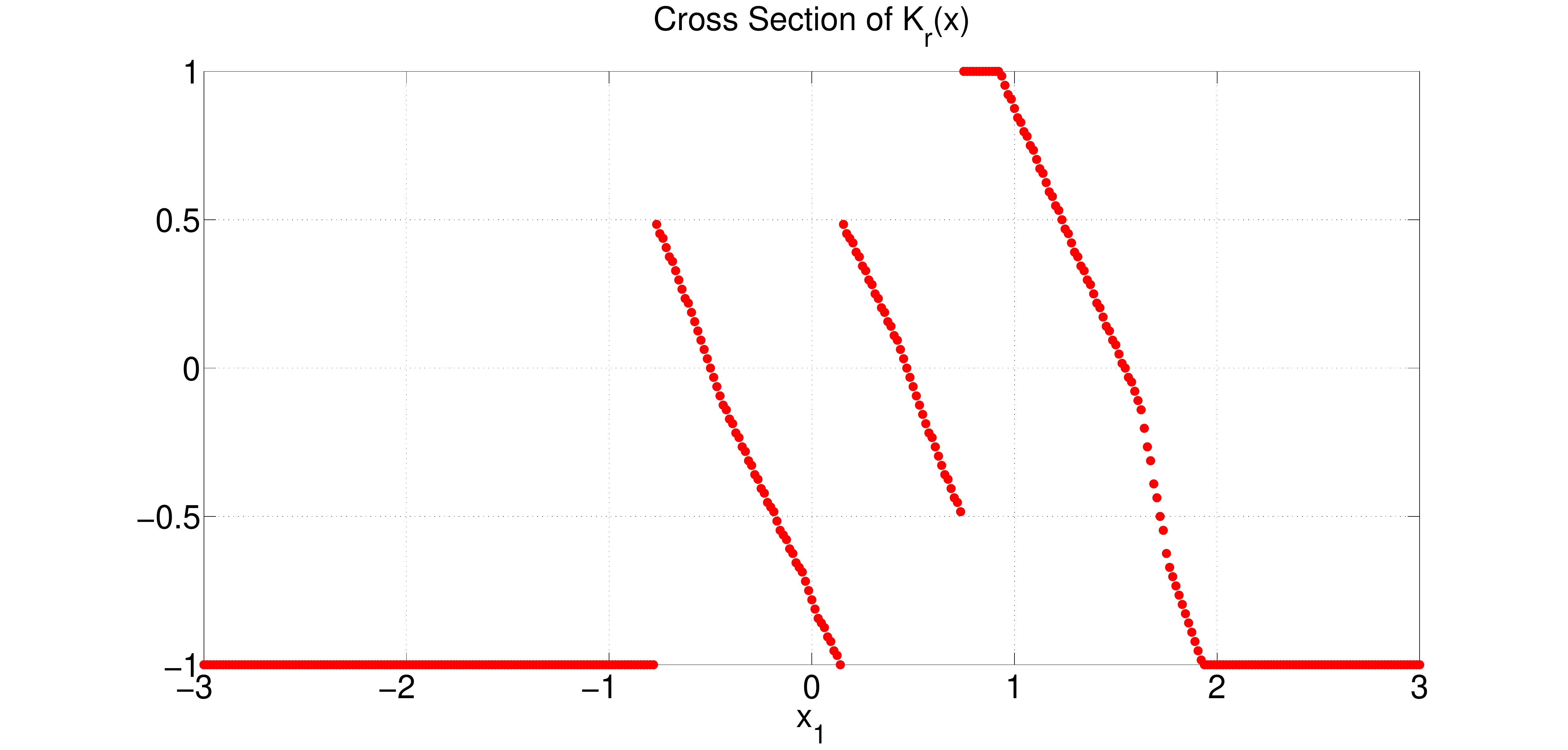}   
%\caption{Cross Section of Control $K_r(x)$ along $x_1=x_2$}
%\label{SliceKr}
%\end{figure}

\begin{figure}[h]
\subfloat[Control $K_r(x)$ for Lower Bound]{\includegraphics[scale=.3]{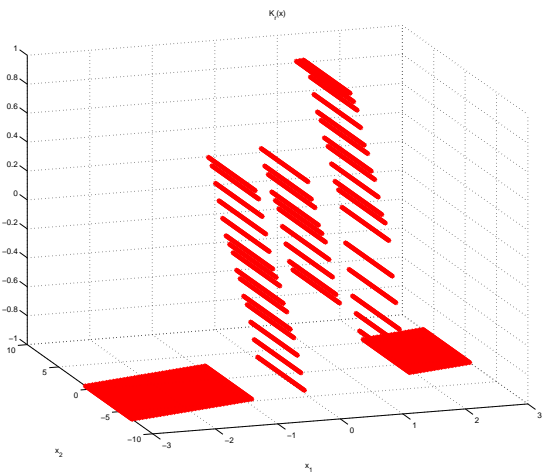}   \label{Kr}}
\subfloat[Cross Section of Control $K_r(x)$ along $x_1=x_2$]{\includegraphics[scale=.2]{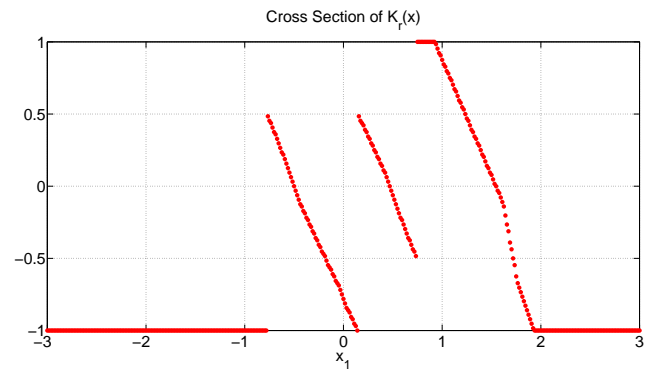}   \label{SliceKr}}
\caption{Control Function $K_r(x)$}
\end{figure}

%
%
%\begin{figure}[h]
%\includegraphics[scale=.23]{Bad_r.eps}   
%\caption{Worst Case Input Sequence r}
%\label{Badr}
%\end{figure}

%
%\begin{figure}[h]
%
%\includegraphics[scale=.5]{V_UB.eps}  
%\caption{Value Function $V(x)$ for Upper Bound}
%\label{VUB}
%\end{figure}
%
%\begin{figure}[h]
%
%\includegraphics[scale=.5]{U_UB_r_eq0.eps}   
%\caption{Control $K(x,r)$ for $r=0$ for Upper Bound}
%\label{Kxr}
%\end{figure}

\bigskip 
%%%%%%%%%%%%%%%%%%%%%%%%%%%%%%%%%%%%%%%%%%%%%%%%%%%%%%%%%%%%%%%%%%%%%%%%%%%%%%%%

\section{CONCLUSION\label{Future}}
In this paper, we studied performance limitations of Analog to Digital Converters (ADCs). The performance of an ADC was defined in terms of a measure that represents the worst case average intensity of the filtered input matching error. The passband of the shaping filter defines the frequency region in which the error is to be minimized. The problem of finding a lower bound for the performance of an ADC was associated with a full information feedback optimal control problem and formulated as a dynamic game in which the input of the ADC (control variable) played against the output of the ADC (quantized disturbance). Since the disturbances can exceed the control variable in magnitude, if the shaping filter has a pole on the unit circle, then there does not exist a bounded control invariant set, which presents a challenge for numerical computations. This challenge is overcome with theoretical results that show that the value function is zero beyond a bounded region, thus computations need to be done only over this bounded region. A numerical algorithm was presented that provided certified solutions to the underlying Bellman inequality in parallel with the control law; hence, certified lower bounds on the performance of arbitrary ADCs with respect to the adopted performance criteria. 

%%%%%%%%%%%%%%%%%%%%%%%%%%%%%%%%%%%%%%%%%%%%%%%%%%%%%%%%%%%%%%%%%%%%%%%%%%%%

\section{APPENDIX\label{APP}}

\begin{observation}\label{monotonicity}
The sequence of functions $\Lambda_k:X\mapsto \mathbb R_+$ defined by \eqref{FuzzyLamma} is monotonically increasing. 
\end{observation}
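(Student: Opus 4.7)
The plan is to prove the observation by induction on $k$, using the fact that the update operator in \eqref{FuzzyLamma}, namely
\[
\mathcal{B}[\Lambda](x) = \max\left\{0,~\sigma(x) + \min_{r\in\Omega}\max_{u\in U}\Lambda(f(x,r,u))\right\},
\]
is monotone with respect to pointwise ordering of nonnegative functions. So the strategy reduces to showing (a) the base case $\Lambda_1 \ge \Lambda_0$, and (b) monotonicity of $\mathcal{B}$, after which the induction concludes immediately.

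For the base case, I would just evaluate: since $\Lambda_0 \equiv 0$, the inner double extremum in the definition of $\Lambda_1$ vanishes, giving $\Lambda_1(x) = \max\{0, \sigma(x)\} \ge 0 = \Lambda_0(x)$ for every $x \in X$.

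For monotonicity of $\mathcal{B}$, suppose $\Lambda \ge \tilde\Lambda$ pointwise on $X$. Then for each fixed $r \in \Omega$ and $u \in U$ and $x \in X$, we have $\Lambda(f(x,r,u)) \ge \tilde\Lambda(f(x,r,u))$. Taking $\max_{u \in U}$ preserves the inequality, and taking $\min_{r \in \Omega}$ afterward preserves it as well (since the minimum of a pointwise-larger family of functions of $r$ is at least the minimum of the smaller family). Adding $\sigma(x)$ to both sides and then taking the maximum with $0$ both preserve the inequality, so $\mathcal{B}[\Lambda] \ge \mathcal{B}[\tilde\Lambda]$ pointwise.

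Combining, I would set up the induction hypothesis $\Lambda_k \ge \Lambda_{k-1}$, apply $\mathcal{B}$ to both sides, and invoke $\Lambda_{k+1} = \mathcal{B}[\Lambda_k] \ge \mathcal{B}[\Lambda_{k-1}] = \Lambda_k$ to complete the step. There is essentially no obstacle here: the only thing to keep in mind when writing it up is to justify carefully the commutation of the inequality with the interleaved $\min_r \max_u$ operations, which is standard but worth stating explicitly since the argument is used repeatedly throughout the appendix.
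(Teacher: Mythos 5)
Your proof is correct and follows essentially the same route as the paper's: induction on $k$ with base case $\Lambda_1(x)=\max\{0,\sigma(x)\}\ge 0=\Lambda_0(x)$, and an inductive step that uses exactly the monotonicity of the update operator under the interleaved $\min_{r}\max_{u}$ operations. Your only difference is stylistic---you isolate the operator's monotonicity as an explicit lemma, whereas the paper applies it implicitly inside the induction step---so nothing further is needed.
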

\bigskip
\begin{proof}
The proof is done by induction. Since $\Lambda_0(x) =0$ for all $x \in X$, it follows that
\begin{equation}
\Lambda_1(x) = \max\left\{0, ~\sigma(x) \right\} \ge \Lambda_0(x), \qquad \forall x \in X.
\end{equation}
Assume, $\Lambda_k(x) \ge \Lambda_{k-1}(x)$ for all $x \in X$. This assumption in conjunction with equation \eqref{FuzzyLamma}, results in the following inequality
\begin{align*}
\Lambda_{k+1}\left(  x\right) & \ge \max\left\{  0,~\sigma\left(  x\right)  +\min_{r\in \Omega }\max_{u\in U}\Lambda_{k-1}\left(f(x,r,u)\right)  \right\} \\
                                                & = \Lambda_k(x).
\end{align*}
Therefore,
\begin{equation*}
\Lambda_0(x) \le \Lambda_1(x) \le \cdots \le \Lambda_k(x), \quad \forall x \in X,~ k \in \mathbb{Z}_+  
\end{equation*}
\end{proof}

\subsubsection*{\bf Proof of Theorem \ref{dpThmLB}}

{\bf (i)$\implies$(ii)} 
For $\tau = 0$, equation \eqref{Vtau} simplifies to:
\begin{equation}
V_0(x_0) = 0, \quad \forall x_0 \in X.
\end{equation}
For $\tau = 1$, we have: 
\begin{equation*}
V_1(x_0) = \max \{0, \sigma(x_0)\}.
\end{equation*}
The rest of the proof is done by induction over $\tau$. For $\tau = 2$, we have: 
\begin{align*}
V_2(x_0) & = \max \{0, \sigma(x_0)+ \min_{r_0}\max_{u_0,\theta_1}\theta_1 \sigma(x_1)\} \\
		& = \max \{0, \sigma(x_0)+ \min_{r_0}\max_{u_0}V_1(f(x_0,r_0,u_0))\}.
\end{align*}
Assume that, 
\begin{equation*}
V_k(x_0) = \max \{0, \sigma(x_0)+ \min_{r_0}\max_{u_0}V_{k-1}(f(x_0,r_0,u_0))\}.
\end{equation*}

Define $\widetilde{h}_{n+1} = \theta_n \widetilde{h}_n$, $\widetilde{h}_1=1$, for $n=1,2, 3 \cdots$. Equation \eqref{Vtau} can be equivalently written for $\tau = k+1$ as follows:
\begin{equation*}
V_{k+1}(x_0)  =   \max_{\theta_0}\Bigg[ \theta_0 \Big(\sigma(x_0)+   
 \min_{r_0} \max_{u_0}\underbrace{\max_{\theta_1} \cdots \min_{r_{k-1}} \max_{u_{k-1}, \theta_{k}} \sum^{k}_{n=1} \widetilde{h}_{n+1} \sigma(x_n)}_{V_k(x_1)} \Big) \Bigg].
\end{equation*}

%\begin{multline*}
%V_{k+1}(x_0)  =   \max_{\theta_0}\Bigg[ \theta_0 \Big(\sigma(x_0)+   \\
% \hspace{.2in}\min_{r_0} \max_{u_0}\underbrace{\max_{\theta_1} \cdots \min_{r_{k-1}} \max_{u_{k-1}, \theta_{k}} \sum^{k}_{n=1} \widetilde{h}_{n+1} \sigma(x_n)}_{V_k(x_1)} \Big) \Bigg].
%\end{multline*}
Therefore,
\begin{equation}
V_{k+1}(x_0) = \max \{0, \sigma(x_0)+ \min_{r_0}\max_{u_0}V_{k}(f(x_0,r_0,u_0))\}. \label{Vk+1}
\end{equation}

From Observation \ref{monotonicity}, we know that the sequence of functions $V_k$ is monotonically increasing. 
%
%We show by induction that the sequence of functions $V_k$ is monotonically increasing. Since $V_0(x) =0$ for all $x \in X$, it follows that
%\begin{equation}
%V_1(x) = \max\left\{0, ~\sigma(x) \right\} \ge V_0(x), \qquad \forall x \in X.
%\end{equation}
%Assume, $V_k(x) \ge V_{k-1}(x)$ for all $x \in X$. This assumption in conjunction with equation \eqref{Vk+1}, results in the following inequalities
%\begin{align*}
%V_{k+1}\left(  x\right) & \ge \max\left\{  0,~\sigma\left(  x\right)  +\min_{r\in \Omega }\max_{u\in U}V_{k-1}\left(f(x,r,u)\right)  \right\} \\
%                                                & = V_k(x).
%\end{align*}
%Therefore,
%\begin{equation*}
%V_0(x) \le V_1(x) \le \cdots \le V_k(x), \quad \forall x \in X,~ k \in \mathbb{Z}_+  
%\end{equation*}
Since a monotonic sequence of functions converge if and only if it is bounded, we have convergence of the sequence.

{\bf(ii)$\implies$(i)}
Again from Observation \ref{monotonicity}, the sequence of functions $V_k$ is monotonically increasing; thus, in conjunction with convergence of the sequence, we have $\Lambda_\infty < \infty$. Furthermore, since $\Lambda_k(x)  \ge 0$ for all $k \in \mathbb{Z}_+$, we also have $\Lambda_\infty(x)  \ge 0$. It only remains to show that equation \eqref{FuzzyLamma} is equivalent to \eqref{Vtau}. Equation \eqref{FuzzyLamma} for $k=1$ is trivially equivalent to:
\begin{equation*}
\Lambda_1(x_0) = \max_{\theta_0\in\{0,1\}} \theta_0 \sigma(x_0) , \quad \forall x_0 \in X
\end{equation*}
For $k=2$, equation \eqref{FuzzyLamma}  is equivalent to:
\begin{align*}
\Lambda_2(x_0) & =  \max \left \{0,~  \sigma_0(x_0) + \min_{r_0} \max_{u_0} \Lambda_1(x_1)  \right\}\\
& = \max_{\theta_0} \left [ \theta_0  \left(  \sigma_0(x_0) + \min_{r_0} \max_{u_0,\theta_1}\{ \theta_1 \sigma(x_1) \} \right)  \right]\\
& = \max_{\theta_0} \min_{r_0} \max_{u_0,\theta_1} \sum_{n=0}^{1}h_{n+1}\sigma(x_n).
\end{align*}
Assume,
\begin{equation*}
\Lambda_k(x_0)= \max_{\theta_0}\min_{r_0} \max_{u_0, \theta_1} \cdots \min_{r_{k-2}} \max_{u_{k-2}, \theta_{k-1}} \sum^{k-1}_{n=0} h_{n+1} \sigma(x_n). 
\end{equation*}
Substituting the above equation into \eqref{FuzzyLamma} we have:
%\begin{multline*}
%\Lambda_{k+1}(x_0) = \max_{\theta_0} \Bigg[\theta_0 \Big( \sigma(x_0) + \\
%                                           \min_{r_0} \max_{u_0} \underbrace{\max_{\theta_1}  \cdots \min_{r_{k-1}} \max_{u_{k-1}, \theta_{k}} \sum^{k}_{n=1} \widetilde{h}_{n+1} \sigma(x_n)}_{\Lambda_k(x_1)} \Big) \Bigg],
%\end{multline*}
\begin{equation*}
\Lambda_{k+1}(x_0) = \max_{\theta_0} \Bigg[\theta_0 \Big( \sigma(x_0) + 
                                           \min_{r_0} \max_{u_0} \underbrace{\max_{\theta_1}  \cdots \min_{r_{k-1}} \max_{u_{k-1}, \theta_{k}} \sum^{k}_{n=1} \widetilde{h}_{n+1} \sigma(x_n)}_{\Lambda_k(x_1)} \Big) \Bigg],
\end{equation*}
which is equivalent to \eqref{Vtau}. Finally, since the sequence of functions $\Lambda_k$ is monotonically increasing, the limit as $k \to \infty$ of $\Lambda_k$ is equivalent to its supremum over $k$.

{\bf(i)$\implies$(iii)}
Substituting \eqref{Vk+1} into \eqref{Vinf} and interchanging the order of the supremum over $\tau$ with the maximum, we get 
\begin{equation*}
V_\infty(\bar{x}) = \max \{0, \sigma(x_0)+ \sup_{\tau \in \mathbb{Z}_+} \min_{r_0}\max_{u_0}V_{\tau-1}(f(x_0,r_0,u_0))\}.
\end{equation*}

As discussed in the proof of (ii)$\implies$(i), the supremum over $\tau$, in the expression above, is equal to the limit as $\tau \to \infty$. Moreover, a well-known theorem from Analysis states that: given a metric space $X$, a compact metric space $\Omega$, and a continuous function $g:X \times \Omega \mapsto \mathbb R$, the function $\hat g: X \mapsto \mathbb R$ defined by 
\[
\hat g(x) = \max_{r \in \Omega} g(x,r), \quad \text{or} \quad \hat g(x) = \min_{r \in \Omega} g(x,r)
\] 
is continuous. Furthermore, given a compact metric space $\Omega$, and a monotonically increasing sequence of continuous functions $g_k:\Omega \mapsto \mathbb R$ such that $\lim_{k  \to \infty}g_k(r)$ is finite for every $r \in \Omega$, the following equality holds: 
\[ 
\lim_{k \to \infty} \min_{r \in \Omega} g_k(r) = \min_{r \in \Omega} \lim_{k \to \infty} g_k(r)
\]

Therefore, we have \eqref{BellmanEquality}.

%Furthermore, compactness of $R$, finiteness of $U$, and continuity of functions $f$ and $\sigma$, result in the continuity of the sequence of functions $V_k$ for all $k$. Consequently, the order of the limit and minimum can be interchanged, since we have  a monotonically increasing sequence of continuous functions over a compact metric space $R$. Therefore, we have \eqref{BellmanEquality}.

%%%---------- Lemma --------------------
%\begin{lemma} \label{continuity_of_V}
% Let $X$ be a metric space, $R$ be a compact metric space, $U$ be a finite set, and $f:X \times R \times U \mapsto X$ and $\sigma: X \mapsto \mathbb{R}$ be continuous functions. Let  $\Lambda_k:X\mapsto \mathbb R_+$ be a sequence of functions defined by \eqref{FuzzyLamma}, then the functions $\{\Lambda_k\}$ are continuous.
%\end{lemma}
%\begin{proof}
%By standard methods from analysis.
%\end{proof}
%\bigskip
%\begin{lemma}\label{swap_limit_inf}
%Given a compact metric space $R \subset \mathbb{R}$ and a monotonically increasing sequence of continuous functions $g_k: X \mapsto \mathbb{R}$, then
%\begin{equation}
%\lim_{k \rightarrow \infty} \inf_{r \in R} g_k(r) = \inf_{r \in R} \lim_{k \rightarrow \infty} g_k (r)
%\end{equation}
%\end{lemma}
%\smallskip
%\begin{proof}
%By standard methods from analysis.
%\end{proof}
%\bigskip
%%----------------------------------------------

{\bf(iii)$\implies$(iv)}
Trivially true.

{\bf(iv)$\implies$(i)}
Since $V(x) \ge 0$ for all $x$, we can rewrite \eqref{BelleIneq} as
\begin{equation}
V(x) \ge \max \left \{0, ~\sigma(x) + \min_{r}\max_{u} V(f(x,r,u))\right \}. \label{BelleIneq2}
\end{equation}

Inequality \eqref{BelleIneq2} holds for all $x \in X$, thus it holds for the sequence $\{x_0, x_1, \cdots, x_{k-1}\}$, where $x_{k-1}$ satisfies \eqref{xn+1}. Now take inequality \eqref{BelleIneq2} with $x$ replaced by $x_0$ and substitute for $V(f(x_0,r_0,u_0))$ the corresponding inequality for $V(x_1)$: 
%\begin{multline*}
%V(x_0) \ge \max \Bigg\{0, ~\sigma(x_0) + \\ 
% \hspace{.25in}           \left . \min_{r_0}\max_{u_0}\max \left[0,~ \sigma(x_1) + \min_{r_1} \max_{u_1} V(f(x_1,r_1,u_1)) \right] \right \}. 
%\end{multline*}
\begin{equation*}
V(x_0) \ge \max \left\{0, ~\sigma(x_0) + 
            \min_{r_0}\max_{u_0}\max \left[0,~ \sigma(x_1) + \min_{r_1} \max_{u_1} V(f(x_1,r_1,u_1)) \right] \right \}. 
\end{equation*}
Equivalently,
%\begin{multline*}
%V(x_0) \ge \max_{\theta_0} \theta_0 \Bigg(\sigma(x_0) + \\ 
%\hspace{.25in}                                       \left . \min_{r_0}\max_{u_0,\theta_1} \theta_1 \left[\sigma(x_1) + \min_{r_1} \max_{u_1} V(f(x_1,r_1,u_1)) \right] \right). 
%\end{multline*}
\begin{equation*}
V(x_0) \ge \max_{\theta_0} \theta_0 \left(\sigma(x_0) + 
                                      \min_{r_0}\max_{u_0,\theta_1} \theta_1 \left[\sigma(x_1) + \min_{r_1} \max_{u_1} V(f(x_1,r_1,u_1)) \right] \right). 
\end{equation*}

Repeating this process, we have:
%\begin{multline*}
%V(x_0) \ge \overbrace{\max_{\theta_0} \min_{r_0}\max_{u_0,\theta_1} \cdots \min_{r_{k-2}}\max_{u_{k-2},\theta_{k-1}}\Bigg[\sum_{n=0}^{k-1}h_{n+1}\sigma(x_n)}^{S(x_0)} \\
%                                               \left . + \min_{r_{k-1}} \max_{u_{k-1}} h_nV(f(x_{k-1},r_{k-1},u_{k-1})) \right]. 
%\end{multline*}
\begin{equation*}
V(x_0) \ge \overbrace{\max_{\theta_0} \min_{r_0}\max_{u_0,\theta_1} \cdots \min_{r_{k-2}}\max_{u_{k-2},\theta_{k-1}}\Bigg[\sum_{n=0}^{k-1}h_{n+1}\sigma(x_n)}^{S(x_0)} 
                                                + \min_{r_{k-1}} \max_{u_{k-1}} h_nV(f(x_{k-1},r_{k-1},u_{k-1})) \Bigg]. 
\end{equation*}
After rearranging terms we have:
%\begin{multline*}
%S(x_0)   \le V(x_0) - \\
%            \max_{\theta_0} \min_{r_0}\max_{u_0,\theta_1} \cdots \min_{r_{k-1}} \max_{u_{k-1}} h_nV(x_k) . 
%\end{multline*}
\begin{equation*}
S(x_0)   \le V(x_0) - 
            \max_{\theta_0} \min_{r_0}\max_{u_0,\theta_1} \cdots \min_{r_{k-1}} \max_{u_{k-1}} h_nV(x_k) . 
\end{equation*}
Non-negativity and existence of $V$ guarantees:
\begin{equation}
S(x_0) \le V(x_0) < \infty.\label{zebrasMissMe}
\end{equation}
Since \eqref{zebrasMissMe} holds for all $k$, we have \eqref{Vinf}.
\bigskip

The proof for \eqref{bison}-\eqref{yak} is as follows:
Substituting \eqref{Vinf} into the right hand side of \eqref{BellmanEquality} and using the reasoning in (i) $\implies$ (iii) it is easy to see that \eqref{Vinf} is a solution of \eqref{BellmanEquality}. Furthermore, \eqref{bison} was proved within the proof of (i) $\implies$ (ii). Inequality \eqref{buffalo} states that \eqref{Vinf} is the minimal solution of \eqref{BellmanEquality}, this is proven by induction. Let $F$ be a function that maps function $V$ on $X$ into function $FV$ on $X$, defined according to:
\begin{equation*}
(FV)(x)=\max \left\{0, ~\sigma(x) + \min_{r \in \Omega} \max_{u \in U} V(f(x,r,u))\right\}, 
\end{equation*}

then $V=FV$. Since $V \ge 0$, we have $V \ge V_0$. Assume $V \ge V_k$, and apply mapping $F$ to both sides to get
\begin{equation*}
FV \ge FV_k = V_{k+1}.
\end{equation*}
Therefore, $V \ge V_k$ for all $k$, and thus \eqref{Vinf} is the minimal solution of \eqref{BellmanEquality}. Finally, \eqref{yak} is obtained by substituting into \eqref{Vinf} the argument of minimums and maximums for the sequences $r$, $u$, and $\theta$ respectively.
 \begin{flushright}
$\blacksquare$
\end{flushright}

%
%
%\begin{lemma}l
%Given a compact metric space $\Omega \subset \mathbb{R}$ and a monotonically increasing sequence of continuous functions $f_k: \Omega \mapsto \mathbb{R}$, then
%\begin{equation}
%\lim_{n \rightarrow \infty} \inf_{r \in \Omega} f_n(r) = \inf_{r \in \Omega} \lim_{n \rightarrow \infty} f_n (r)
%\end{equation}
%\end{lemma}

The proof of Theorem \ref{boundednessTHM} relies on Lemmas \ref{finiteX} and \ref{supVexists} below. 

\bigskip

\begin{lemma}\label{finiteX}
Let $(A,B)$ be a controllable pair. Then, for every bounded set $\Xi\subset\mathbb{R}^{m}$, there exists a finite set $\widetilde{X}\subset \mathbb{R}^m$ and a function $\rho:\Xi\mapsto[-1,1]^{m}$, such that $x_m \in\widetilde{X}$ whenever $x_0 \in \Xi$ for every solution $(x,r)$ of
\begin{align}
x_{n+1}  & =Ax_n+Br_n  -Bu_n,\quad n\leq m \label{stateEqn}\\
r_n    & =\rho(x_{0})_{n}, \quad n\leq m \label{r_func}
\end{align}
for every $u \in \ell_+(U)$, where $\rho(x_{0})_{n}$ denotes the $n$-th element of $\rho(x_{0}).$
\end{lemma}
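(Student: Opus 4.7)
The plan is to exploit controllability to make the $m$-step open-loop prediction $A^m x_0$ correctable via the choice of $r_0,\dots,r_{m-1}$. First I would unroll the recursion \eqref{stateEqn} to write, for every $u\in\ell_+(U)$,
\begin{equation*}
x_m \;=\; A^m x_0 \;+\; \mathcal{C}\,\mathbf{r} \;-\; \mathcal{C}\,\mathbf{u},
\end{equation*}
where $\mathcal{C}=[A^{m-1}B,\,A^{m-2}B,\ldots,AB,\,B]\in\mathbb{R}^{m\times m}$ is the controllability matrix, $\mathbf{r}=(r_0,\ldots,r_{m-1})^T$, and $\mathbf{u}=(u_0,\ldots,u_{m-1})^T$. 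Because $(A,B)$ is controllable and $B$ is a column (since $r_n$ is scalar), $\mathcal{C}$ is invertible.

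The idea is then to pre-commit the control sequence $\rho(x_0)$ so that the \emph{noise-free terminal state} $v(x_0)\;{:=}\;A^m x_0+\mathcal{C}\rho(x_0)$ takes only finitely many values as $x_0$ ranges over $\Xi$. To do this I would fix a grid spacing $\delta>0$ small enough that $\delta\,\|\mathcal{C}^{-1}\|_\infty\le 1$, and consider the finite grid
\begin{equation*}
\widetilde V \;=\; \delta\,\mathbb{Z}^m \;\cap\; \bigl(A^m\Xi+[-\delta,\delta]^m\bigr),
\end{equation*}
which is finite because $A^m\Xi$ is bounded. For each $x_0\in\Xi$, let $v^*(x_0)\in\widetilde V$ be any grid point with $\|v^*(x_0)-A^m x_0\|_\infty\le \delta$, and set $\rho(x_0)\;{:=}\;\mathcal{C}^{-1}\!\bigl(v^*(x_0)-A^m x_0\bigr)$. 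Then $\|\rho(x_0)\|_\infty\le \|\mathcal{C}^{-1}\|_\infty\,\delta\le 1$, so $\rho:\Xi\to[-1,1]^m$ as required.

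With this $\rho$, the unrolled formula gives $x_m=v^*(x_0)-\mathcal{C}\mathbf{u}$ for every disturbance $\mathbf{u}\in U^m$. Setting
\begin{equation*}
\widetilde X \;=\; \bigl\{\,v-\mathcal{C}\mathbf{u}\;:\; v\in\widetilde V,\ \mathbf{u}\in U^m\,\bigr\},
\end{equation*}
we get a finite set (product of two finite sets) containing $x_m$ for every $x_0\in\Xi$ and every $u\in\ell_+(U)$, which is the conclusion of the lemma.

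The only step that requires genuine care is the quantitative choice of $\delta$: I need the inverse-image under $\mathcal{C}$ of a $\delta$-neighborhood to sit inside the $[-1,1]^m$ box of admissible control histories. This is handled by the bound $\|\mathcal{C}^{-1}(v^*(x_0)-A^m x_0)\|_\infty\le \|\mathcal{C}^{-1}\|_\infty\,\|v^*(x_0)-A^m x_0\|_\infty$, so $\delta\le 1/\|\mathcal{C}^{-1}\|_\infty$ suffices; controllability ensures $\mathcal{C}^{-1}$ exists, and boundedness of $\Xi$ ensures $\widetilde V$ is finite. No further obstacle remains.
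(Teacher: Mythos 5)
Your proposal is correct and takes essentially the same approach as the paper's own proof: unroll the dynamics to $x_m = A^m x_0 + L_c\mathbf{r} - L_c\mathbf{u}$ with $L_c$ the (invertible, by controllability with scalar input) controllability matrix, grid the bounded set $A^m\Xi$ with spacing controlled by $\|L_c^{-1}\|_\infty$, and define $\rho(x_0)$ as the control history steering the disturbance-free terminal state onto a nearby grid point, so that $x_m$ lies in the finite set of grid points shifted by the finitely many disturbance histories $\mathbf{u}\in U^m$. The only differences are cosmetic, e.g.\ your spacing condition $\delta\le 1/\|\mathcal{C}^{-1}\|_\infty$ with a $\delta$-close grid point versus the paper's $\Delta\le 2/\|L_c^{-1}\|_\infty$ with nearest-point ($\Delta/2$) rounding.
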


\begin{proof}
The solution to \eqref{stateEqn} is given by
\begin{equation}
x_m=A^{m}x_{0}+\displaystyle\sum_{i=0}^{m-1}A^{i}Br_{m-i-1}-\displaystyle\sum_{i=0}^{m-1}A^{i}Bu_{m-i-1}. \label{dolphin}
\end{equation}

Since $\Xi$ is bounded, $A^m \Xi$ is also bounded, thus the first term on the right hand side of \eqref{dolphin} is bounded. Let $L_c$ denote the controllability matrix:
\begin{equation*}
L_c = [A^{m-1}B \cdots AB ~~B].
\end{equation*} 

Construct a finite set $\widetilde{\Xi}_F \subset \widetilde{\Xi}$ as follows: let $\widetilde{\Xi}_F$ be the intersection of $\widetilde{\Xi}$ and the set consisting of uniformly spaced Cartesian grid points with spacing $\Delta$, where
\begin{equation*}
\Delta \le 2/ ||L_c^{-1}||_\infty.
\end{equation*}

Then for every $y_0 \in \widetilde{\Xi}$, there exists $\tilde{\xi} \in \widetilde{\Xi}_F$ such that:
\begin{equation*}
||y_0 - \tilde{\xi}||_\infty \le \Delta/2
\end{equation*}

Thus, 
\begin{equation*}
||L_c^{-1}(y_0 - \tilde{\xi})||_\infty \le 1,
\end{equation*}
which implies

\begin{equation}
L_c^{-1}(A^{m}x_{0}-\tilde{\xi}) \in [-1,1]^m, \quad \forall x_{0}\in\Xi \label{finiteSetCondition}
\end{equation}

Then for
\begin{equation}
\rho(x_{0})=-L_c^{-1}(A^{m}x_{0}-\tilde{\xi}),\label{badR}%
\end{equation}
we have
\begin{equation*}
x_m=\tilde{\xi}-\displaystyle\sum_{i=0}^{m-1}A^{i}Bu_{m-i-1}.
\end{equation*}

Since  $\widetilde{\Xi}_F$ and $U$ are finite sets, $x_m$ takes only a finite number of values.
\end{proof}

\bigskip

\begin{lemma} \label{supVexists}
Assume $(A,B)$ is controllable and the function $\sigma:\mathbb{R}^{m}
\mapsto\mathbb{R}$ is BIBO. If %the minimal solution of
%\begin{equation}
% V(x)=\max \left \{0,~ \sigma(x)+ \min_{r \in [-1,1]} \max_{u \in U} V(Ax+Br-Bu)\right \} \label{belleEQ}
%\end{equation}
%given by 
$V_\infty$ in \eqref{Vinf}-\eqref{Vtau} satisfies
\begin{equation}
V_\infty(x)<\infty,\quad\forall x\in\mathbb{R}^{m}\label{Vbounded}
\end{equation}
then $V_\infty$ is BIBO.
 \end{lemma}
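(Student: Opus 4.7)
\textbf{Proof plan for Lemma \ref{supVexists}.}
The plan is to show that for every bounded set $\Xi \subset \mathbb{R}^m$, the quantity $\sup_{x_0 \in \Xi} V_\infty(x_0)$ is finite. The idea is to use the controllability-based steering provided by Lemma \ref{finiteX} to drive every initial condition in $\Xi$ into a common finite set $\widetilde{X}$ in $m$ steps, regardless of the disturbance sequence $u \in \ell_+(U)$, and then to invoke the Bellman identity \eqref{BellmanEquality} (available because $V_\infty$ solves it by Theorem~\ref{dpThmLB}) to bound $V_\infty(x_0)$ in terms of the intermediate $\sigma$-values and the value at the landing point $x_m \in \widetilde{X}$.

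First I would apply Lemma~\ref{finiteX} to the set $\Xi$ to obtain a finite set $\widetilde{X}$ and a feedback $\rho : \Xi \to [-1,1]^m$ such that, for every $x_0 \in \Xi$ and every $u \in \ell_+(U)$, the trajectory generated by \eqref{stateEqn}--\eqref{r_func} satisfies $x_m \in \widetilde{X}$. Next, I would observe that, since $r$ is constrained to $[-1,1]$ and $u$ takes values in the finite set $U$, the reachable set
\[
\Xi_n = \{x_n : x_0 \in \Xi,\ r_0,\ldots,r_{n-1} \in [-1,1],\ u_0,\ldots,u_{n-1} \in U\}
\]
is bounded for each $n \in \{0,1,\ldots,m\}$. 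Because $\sigma$ is BIBO, the constants $M_n := \sup_{x \in \Xi_n} \max\{0,\sigma(x)\}$ are all finite.

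Now I would use the Bellman equation \eqref{BellmanEquality} at each time step along the $\rho$-trajectory. Since $V_\infty \ge 0$, one has the elementary inequality $\max\{0,a+b\} \le \max\{0,a\} + b$ whenever $b \ge 0$. Applied to \eqref{BellmanEquality} with the specific choice $r = \rho(x_0)_0$ (which may only increase the inner minimum), this gives
\[
V_\infty(x_0) \le \max\{0,\sigma(x_0)\} + \max_{u_0 \in U} V_\infty\bigl(f(x_0,\rho(x_0)_0,u_0)\bigr).
\]
Iterating this inequality $m$ times along the feedback $\rho$ yields
\[
V_\infty(x_0) \le \sum_{n=0}^{m-1} M_n + \max_{u_0,\ldots,u_{m-1} \in U} V_\infty(x_m).
\]
Since $x_m$ lies in the finite set $\widetilde{X}$ and, by hypothesis, $V_\infty(y) < \infty$ for every $y \in \mathbb{R}^m$, the last term is bounded by the finite constant $\max_{y \in \widetilde{X}} V_\infty(y)$. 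Combining these bounds gives a finite upper bound on $V_\infty(x_0)$ that is uniform in $x_0 \in \Xi$, proving $V_\infty$ is BIBO.

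The main technical obstacle is the bookkeeping around the $\max\{0,\cdot\}$ in \eqref{BellmanEquality} and the correct handling of the game structure: although the Bellman equation has $\min_r \max_u$, I can only \emph{upper} bound $V_\infty$ by fixing $r$ (to the value given by $\rho$) while still maximizing over $u$, which is exactly what is needed because Lemma~\ref{finiteX} guarantees that the landing set $\widetilde{X}$ is reached for \emph{every} admissible disturbance sequence $u$. Once this pairing is set up, the rest is a direct telescoping argument using the BIBO property of $\sigma$ and boundedness of the reachable sets $\Xi_n$.
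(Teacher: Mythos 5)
Your proposal is correct and, in substance, it is the paper's own argument: both apply Lemma~\ref{finiteX} to steer every $x_0\in\Xi$ into the finite set $\widetilde{X}$ in $m$ steps uniformly over all disturbance sequences $u$, bound the $\sigma$-contributions of the first $m$ steps using BIBO-ness of $\sigma$ together with boundedness of the $m$-step reachable sets, and bound the tail by $\max_{y\in\widetilde{X}}V_\infty(y)<\infty$; the only structural difference is that you iterate a one-step inequality $m$ times, whereas the paper splits the finite-horizon game at time $m$ in one stroke (its \eqref{22sums}--\eqref{222sums}) and only then takes $\sup_\tau$. One caveat deserves repair: you justify your one-step inequality by citing Theorem~\ref{dpThmLB} to assert that $V_\infty$ satisfies the Bellman equality \eqref{BellmanEquality}, but that theorem assumes $\sigma$ continuous, while Lemma~\ref{supVexists} assumes only that $\sigma$ is BIBO, so the equality is not available as stated (continuity is what licenses the limit/min interchange in the paper's proof of (i)$\implies$(iii)). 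Fortunately you only ever use the ``$\leq$'' direction, and that holds without continuity: the finite-horizon recursion \eqref{Vk+1} is purely structural, and taking $\sup_\tau$ with the elementary exchange $\sup_\tau\min_r\max_u \le \min_r\max_u\sup_\tau$ yields $V_\infty(x)\le\max\left\{0,\,\sigma(x)+\min_{r}\max_{u}V_\infty(f(x,r,u))\right\}$, which is all your telescoping step needs, since fixing $r=\rho(x_0)_n$ only weakens the minimum further. With that citation replaced by this observation (or by working directly from \eqref{Vtau} as the paper does), your proof is sound and matches the paper's.
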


\begin{proof}
Let $\alpha: \mathbb{R}^m \times \ell_+([-1,1]) \times \ell_+(U) \times \mathbb{Z}_+ \mapsto \mathbb{R}^m$ be a function that
maps the initial state $x_0$ and sequences $r$ and $u$ to the state $x$ at time $k$, where the evolution of the state is given by $x[n+1]=Ax[n]+Br[n]-Bu[n]$:
\begin{equation}
\alpha(x_0,r,u,k) = x_k. \label{alpha_x}
\end{equation}

%From Theorem \ref{dpThmLB}, if (\ref{Vbounded}) holds, then for every initial condition $x_{0}$ the function in \eqref{Vinf} is a minimal solution of (\ref{belleEQ}). 
Equation \eqref{Vtau} can be equivalently written as:

%\begin{multline}
% V_\tau(x_0)   = \max_{\theta_0} \min_{r_0} \max_{u_0, \theta_1} \cdots \min_{r_{m-1}} \max_{u_{m-1}}\left \{ \sum^{m-1}_{n=0} h_{n+1} \sigma(x_n)\right.\\
%                             \left. +\max_{ \theta_{m}}\min_{r_{m}} \max_{u_{m}, \theta_{m+1}} \cdots \min_{r_{\tau-2}} \max_{u_{\tau-2}, \theta_{\tau-1}} \sum^{\tau-1}_{n=m}h_{n+1}  \sigma(x_n)\right\} \label{22sums}
%\end{multline}
\begin{equation}
 V_\tau(x_0)   = \max_{\theta_0} \min_{r_0} \max_{u_0, \theta_1} \cdots \min_{r_{m-1}} \max_{u_{m-1}}\left \{ \sum^{m-1}_{n=0} h_{n+1} \sigma(x_n)\right.
                             \left. +\max_{ \theta_{m}}\min_{r_{m}} \max_{u_{m}, \theta_{m+1}} \cdots \min_{r_{\tau-2}} \max_{u_{\tau-2}, \theta_{\tau-1}} \sum^{\tau-1}_{n=m}h_{n+1}  \sigma(x_n)\right\} \label{22sums}
\end{equation}

 Denote,
\begin{equation*}
 \widehat{r} = \{r_i\}_{i=0}^{m-1}, \quad \widehat{u} = \{u_i\}_{i=0}^{m-1}, \quad \widehat{\theta} = \{\theta_i\}_{i=0}^{m-1}.
\end{equation*}

We can rewrite \eqref{22sums} as:
%\begin{multline}
% V_\tau(x_0)  = \max_{\theta_0} \min_{r_0} \max_{u_0, \theta_1} \cdots \min_{r_{m-1}} \max_{u_{m-1}}\left \{ \sum^{m-1}_{n=0} h_{n+1} \sigma(x_n)\right.\\
%                             \left. +V_\tau(\alpha(x_0,\widetilde{r},\widetilde{u},m))\right\} \label{222sums}
%\end{multline}
\begin{equation}
 V_\tau(x_0)  = \max_{\theta_0} \min_{r_0} \max_{u_0, \theta_1} \cdots \min_{r_{m-1}} \max_{u_{m-1}}\left \{ \sum^{m-1}_{n=0} h_{n+1} \sigma(x_n)
                              +V_\tau(\alpha(x_0,\widehat{r},\widehat{u},m))\right\} \label{222sums}
\end{equation}

Let $\Xi$ be a bounded subset of $\mathbb{R}^{m}$ and $x_{0}\in \Xi$. Furthermore, let $\rho:\Xi\mapsto\lbrack-1,1]^{m}$ be the function defined in (\ref{badR}) and $\widetilde{X} \subset \mathbb{R}^m$ denote the set of all states that can be reached in exactly $m$ steps for some input sequence $u \in \ell_+(U)$. According to Lemma \ref{finiteX}, the set $\widetilde{X}$ is finite. Let $\widehat{r} = \rho(x_0)$, consequently $\alpha(x_0,\rho(x_0),\widehat{u},m)=x_m \in \widetilde{X}$ for every sequence $\widehat{u}$. Denote $\bar r = \{r_i\}_{i=0}^{m-2}, ~\bar u = \{u_i\}_{i=0}^{m-2}$, and
\begin{equation*}
\bar \nu \left (x_0, \bar r,\bar{u},\widehat{\theta} \right )   \;  {\buildrel\rm def\over =}\; \sum^{m-1}_{n=0} h_{n+1} \sigma(x_n).
\end{equation*}

Let $\bar{r} = \{\rho(x_0)_i\}_{i=0}^{m-2}$ and denote,

\begin{equation*}
\nu \left (x_0, \bar{u},\widehat{\theta} \right )  \;  {\buildrel\rm def\over =}\;  \bar \nu \left (x_0, \bar r,\bar{u},\widehat{\theta} \right )  \bigg |_{\bar{r} = \{\rho(x_0)_i\}_{i=0}^{m-2}} .
\end{equation*}

Thus,
\begin{equation*}
 V_\tau(x_0)   \le \max_{\widehat{u}, \widehat{\theta}} \left \{ \nu \left (x_0,\bar{u},\widehat{\theta} \right ) +V_\tau(x_m)\right\}, 
\end{equation*}
where $x_m \in \widetilde{X}$ for every sequence $\widehat{u}$. Taking supremum over $\tau$ from both sides of the above inequality, we have:
\begin{align*}
 V_\infty(x_0) & \le \sup_\tau \max_{\widehat{u}, \widehat{\theta}} \left \{ \nu \left (x_0,\bar{u},\widehat{\theta} \right ) +V_\tau(x_m)\right\}, \\
                       & = \max_{\widehat{u}, \widehat{\theta}} \left \{ \nu \left (x_0,\bar{u},\widehat{\theta} \right ) +V_\infty(x_m)\right\}.                
\end{align*}

Hence,
\begin{equation}
\sup_{x_0 \in \Xi}  V_\infty(x_0) \le \sup_{x_0 \in \Xi} \max_{\bar{u}, \widehat{\theta}} \nu \left (x_0,\bar{u},\widehat{\theta} \right ) +  \sup_{x_0 \in \Xi} \max_{\widehat{u}, \widehat{\theta}}V_\infty(x_m).   \label{theIneq}
\end{equation}

Since $\Xi$ and $U$ are bounded, the set 
\[
 \left \{ A^nx + \sum_{k=-1}^{n-2}A^{k+1}B(r_{n-k}-u_{n-k}): x \in \Xi, r_k \in [-1,1], u_k \in U \right\}
\]

\noindent is also bounded for every finite $n$. Furthermore, $\sigma$ is BIBO, which immediately implies that the first supremum on the right side of inequality \eqref{theIneq} is bounded. Moreover, $x_m$ can take only a finite number of values for every $x_0 \in \Xi$ and every sequence $\widehat{u}$. Since $V_\infty$ is finite and the supremum over a finite set is finite, the second term on the right side of inequality \eqref{theIneq} is also bounded. Hence $V_\infty$ is BIBO.
\end{proof}

\bigskip

\subsubsection*{\bf Proof of Theorem \ref{boundednessTHM}}
For system \eqref{x+} with exactly one pole $z_1$ on the unit circle and all other poles strictly inside the unit circle, with $e_{1} \in \mathbb{R}^m\backslash \{ 0\}$ such that $Ae_1 = z_1 e_1$, and $Q \in \mathbb{R}^{m \times m}$, $Q = Q' > 0$ such that $Q \ge A'QA$, there exists an invariant cylinder with axis $e_1$ for some $\beta > 0$. Furthermore, the intersection of $\mathcal{C}_{Q,\beta}(e_1)$ with the set $\{|e_1x|<\zeta: x \in \mathbb R^m,\zeta > 0\}$ is bounded whenever $Ce_1 \ne 0$. Define
\begin{equation*}
M_0 =  \sup_{x\in S_0} V_\infty(x).
\end{equation*}

Lemma \ref{supVexists} guarantees finiteness of the supremum. Let $\alpha: \mathbb{R}^m \times \ell_+([-1,1]) \times \ell_+(U) \times \mathbb{Z}_+ \mapsto \mathbb{R}^m$ be a function defined in the proof of Lemma \ref{supVexists}. Let $L$ denote the smallest integer strictly larger than $M_0/\epsilon_0$. Define
%\begin{multline}
%S_L = \left\{x \in \mathcal{C}_{Q,\beta}(e_1) :  \alpha(x,r,u,k) \notin S_0, ~ \forall k \le L,\right.\\
%\left. ~ \forall r \in \ell_+([-1,1]), ~\forall u \in \ell_+(U)  \right\}
%\end{multline}
\begin{equation*}
S_L = \left\{x \in \mathcal{C}_{Q,\beta}(e_1) :  \alpha(x,r,u,k) \notin S_0, ~ \forall k \le L,
 ~ \forall r \in \ell_+([-1,1]), ~\forall u \in \ell_+(U)  \right\}
\end{equation*}
That is, $S_L$ is the set of all states within the invariant cylinder from which $S_0$ cannot be reached in L steps or less. The complement of the set $S_L$ is the region of the cylinder for which there exist sequences $r$ and $u$ such that the state gets to $S_0$ in $L$ steps or less. Since, both $r$ and $u$ are uniformly  bounded, the set $S_L^c$ is bounded.

%%Related to \eqref{Vtau} 

For $j=\{0,1,2, \cdots,\tau-2\}$, define functions $g_j:\mathbb{R}^m \times \ell_+(\{0,1\}) \times \ell_+([-1,1]) \times \ell_+(U) \mapsto \mathbb{R}$,
%\begin{multline}
%g_j(\bar{x},\{\theta_i \}_{i=0}^j,\{r_i\}_{i=0}^{j-1},\{u_i\}_{i=0}^{j-1})=\\
%\min_{r_j} \max_{u_j, \theta_{j+1}} \cdots \min_{r_{\tau-2}} \max_{u_{\tau-2}, \theta_{\tau-1}} \sum^{\tau-1}_{n=0} h_{n+1} \sigma(x_n) \label{g_j}
%\end{multline}
\begin{equation}
g_j(\bar{x},\{\theta_i \}_{i=0}^j,\{r_i\}_{i=0}^{j-1},\{u_i\}_{i=0}^{j-1})=
\min_{r_j} \max_{u_j, \theta_{j+1}} \cdots \min_{r_{\tau-2}} \max_{u_{\tau-2}, \theta_{\tau-1}} \sum^{\tau-1}_{n=0} h_{n+1} \sigma(x_n) \label{g_j}
\end{equation}
\begin{equation}
g_{\tau-1}(\bar{x},\{\theta_i \}_{i=0}^{\tau-1},\{r_i\}_{i=0}^{\tau-2},\{u_i\}_{i=0}^{\tau-2})=  \sum^{\tau-1}_{n=0} h_{n+1} \sigma(x_n)
\end{equation}

For $\epsilon > 0$ and $\bar x \in \mathbb R^m$, let $\widetilde{\tau}(\bar x) \in  \mathbb{Z}_+$,  $\widetilde{\Theta}_{0}(\bar{x}) \in \{0,1\}$, $\widetilde{R}_0(\bar{x})  \in [-1,1] $, $\widetilde{\mathcal{U}}_0(\bar{x}) \in U$, $\widetilde{\Theta}_{1}(\bar{x}) \in \{0,1\}$ be functions such that:
\begin{align}
\widetilde{\tau}(\bar x) & \in \arg^\epsilon \sup_{\tau \in \mathbb{Z}_+} V_\tau(\bar{x}), \label{etau} \\
\widetilde{\Theta}_0(\bar{x}) & =  \theta_0 \in \arg \max_{\theta_0} g_0(\bar{x},\theta_0), 
\end{align}
\begin{equation}
\widetilde{R}_0(\bar{x})  =  r_0 \in  \arg \min_{r_0} \max_{u_0, \theta_{1}}  g_{1}\left(\bar{x},\left(\widetilde{\Theta}_0(\bar{x}),\theta_{1}\right),r_0, u_0 \right), 
\end{equation}
%\begin{multline}
%\left( \widetilde{\mathcal{U}}_0(\bar{x}), \widetilde{\Theta}_{1}(\bar{x}) \right)  = \left(u_0, \theta_{1} \right) \in \\
%\arg \max_{u_0, \theta_{1}}  g_{1}\left(\bar{x},\left(\widetilde{\Theta}_0(\bar{x}),\theta_{1}\right),\widetilde{R}_0(\bar{x}), u_0 \right).
%\end{multline}
\begin{equation}
\left( \widetilde{\mathcal{U}}_0(\bar{x}), \widetilde{\Theta}_{1}(\bar{x}) \right)  = \left(u_0, \theta_{1} \right) \in 
\arg \max_{u_0, \theta_{1}}  g_{1}\left(\bar{x},\left(\widetilde{\Theta}_0(\bar{x}),\theta_{1}\right),\widetilde{R}_0(\bar{x}), u_0 \right).
\end{equation}

For $\bar x \in \mathbb R^m$ and $j=\{0,1,2, \cdots,\widetilde{\tau}(\bar x)-2\}$, let $\widetilde{\Theta} \in \{0,1\}^{j+2}$, $\widetilde{R}  \in [-1,1]^{j+1} $, $\widetilde{\mathcal{U}} \in U^{j+1}$, $\widetilde{R}_j(\bar{x})  \in [-1,1] $, $\widetilde{\mathcal{U}}_j(\bar{x}) \in U$, and $\widetilde{\Theta}_{j+1}(\bar{x}) \in \{0,1\}$ be functions such that:
%\begin{align}
%\widetilde{\Theta} & = \left(\{\widetilde{\Theta}_i(\bar{x})\}_{i=0}^j,\theta_{j+1}\right), \widetilde{R} = \left(\{\widetilde{R}_i(\bar{x})\}_{i=0}^{j-1},r_j\right), \\
%\widetilde{\mathcal{U}} & = \left(\{\widetilde{\mathcal{U}}_i(\bar{x})\}_{i=0}^{j-1},u_j\right).
%\end{align}
\begin{equation}
\widetilde{\Theta} = \left(\{\widetilde{\Theta}_i(\bar{x})\}_{i=0}^j,\theta_{j+1}\right), \quad \widetilde{R} = \left(\{\widetilde{R}_i(\bar{x})\}_{i=0}^{j-1},r_j\right), \quad
\widetilde{\mathcal{U}} = \left(\{\widetilde{\mathcal{U}}_i(\bar{x})\}_{i=0}^{j-1},u_j\right).
\end{equation}

\begin{equation}
\widetilde{R}_j(\bar{x}) =  r_j \in  \arg \min_{r_j} \max_{u_j, \theta_{j+1}}  g_{j+1}\left(\bar{x},\widetilde{\Theta},\widetilde{R}, \widetilde{U} \right), 
\end{equation}
%\begin{multline}
%\left( \widetilde{\mathcal{U}}_j(\bar{x}), \widetilde{\Theta}_{j+1}(\bar{x}) \right) = \left(u_j, \theta_{j+1}\right) \in \\
%\arg \max_{u_j, \theta_{j+1}}   g_{j+1}\left(\bar{x},\widetilde{\Theta},\{\widetilde{R}_i(\bar{x})\}_{i=0}^{j}, \widetilde{\mathcal{U}} \right). \label{U_j}
%\end{multline}
\begin{equation}
\left( \widetilde{\mathcal{U}}_j(\bar{x}), \widetilde{\Theta}_{j+1}(\bar{x}) \right) = \left(u_j, \theta_{j+1}\right) \in 
\arg \max_{u_j, \theta_{j+1}}   g_{j+1}\left(\bar{x},\widetilde{\Theta},\{\widetilde{R}_i(\bar{x})\}_{i=0}^{j}, \widetilde{\mathcal{U}} \right). \label{U_j}
\end{equation}

\bigskip

Assuming that $\bar{x} \in S_L$, there are two cases to consider, either: 
\bigskip
\begin{enumerate}
\item $\alpha(\bar{x},\{\widetilde{R}_i(\bar{x})\}_{i=0}^{J-1},\{\widetilde{\mathcal{U}}_i(\bar{x})\}_{i=0}^{J-1},J) \notin S_0$, for all $J \in \{0, 1, 2, \cdots \widetilde{\tau}-1\}$.

\item There exists an integer $J > L$ such that $\alpha(\bar{x},\{\widetilde{R}_i(\bar{x})\}_{i=0}^{J-1},\{\widetilde{\mathcal{U}}_i(\bar{x})\}_{i=0}^{J-1},J) \in S_0$.
\end{enumerate}

\bigskip

From equations \eqref{g_j}$-$\eqref{U_j}, we have:
\begin{align}
g_0(\bar{x},0) & = 0,\\
g_0(\bar{x},1) & = \min_{\{r_i\}_{i=0}^{\widetilde{\tau}(\bar x)-2}}\sum^{\widetilde{\tau}(\bar x)-1}_{n=0} h_{n+1} \sigma(x_n)\\
                                     & \le \sum^{\widetilde{\tau}(\bar x)-1}_{n=0} h_{n+1} \sigma(x_n)\left|_{\{r_i\}_{i=0}^{\widetilde{\tau}(\bar x)-2}=\{\widetilde{R}_i(\bar{x})\}_{i=0}^{\widetilde{\tau}(\bar x)-2}} \right.
\end{align}

\noindent where  $\{u_i\}_{i=0}^{\widetilde{\tau}(\bar x)-2}=\{\widetilde{\mathcal{U}}_i(\bar{x})\}_{i=0}^{\widetilde{\tau}(\bar x)-2}$ and $\{\theta_i\}_{i=1}^{\widetilde{\tau}(\bar x)-1}=\{\widetilde{\Theta}_i(\bar{x})\}_{i=1}^{\widetilde{\tau}(\bar x)-1}$. Furthermore, by \eqref{Vinf} and \eqref{etau}, we have:
\begin{align}
V_\infty(\bar{x}) & < \epsilon + V_{\widetilde{\tau}(\bar x)}(\bar{x}) = \epsilon + \max_{\theta_0}g_0(\bar{x},\theta_0)\\
                            & = \epsilon + \max\left\{g_0(\bar{x},0), g_0(\bar{x},1)\right\}.
\end{align}

Consider case $(1)$: since $\sigma(x) \le -\epsilon_0$ for all $x\notin S_0$, the sum over $h_{n+1}\sigma(x_n)$ will be negative for all $\widetilde{\tau}(\bar x)$. Thus, 
\begin{equation}
V_\infty(\bar{x}) < \epsilon, \quad \forall \bar{x} \in S_L.
\end{equation}

For case $(2)$, we can write $g_0(\bar{x},1)$ equivalently as:
%\begin{multline}
% g_0(\bar{x},1)  = \min_{r_0} \max_{u_0, \theta_1} \cdots \min_{r_{J-2}} \max_{u_{J-2}, \theta_{J-1}}\left \{ \sum^{J-1}_{n=0} h_{n+1} \sigma(x_n)\right.\\
%                        \hspace{.1in}               \left. +\min_{r_{J-1}} \max_{u_{J-1}, \theta_J} \cdots \min_{r_{\widetilde{\tau}(\bar x)-2}} \max_{u_{\widetilde{\tau}(\bar x)-2}, \theta_{\widetilde{\tau}(\bar x)-1}} \sum^{\widetilde{\tau}(\bar x)-1}_{n=J}h_{n+1}  \sigma(x_n)\right\} \label{2sums}
%\end{multline}
\begin{equation}
 g_0(\bar{x},1)  = \min_{r_0} \max_{u_0, \theta_1} \cdots \min_{r_{J-2}} \max_{u_{J-2}, \theta_{J-1}}\left \{ \sum^{J-1}_{n=0} h_{n+1} \sigma(x_n)
                              +\min_{r_{J-1}} \max_{u_{J-1}, \theta_J} \cdots \min_{r_{\widetilde{\tau}(\bar x)-2}} \max_{u_{\widetilde{\tau}(\bar x)-2}, \theta_{\widetilde{\tau}(\bar x)-1}} \sum^{\widetilde{\tau}(\bar x)-1}_{n=J}h_{n+1}  \sigma(x_n)\right\} \label{2sums}
\end{equation}
Since the first summation term in \eqref{2sums} is bounded above by $-J\epsilon_0$ for $\{\widetilde{\Theta}_i(\bar{x})\}_{i=0}^{J-1}$ and all sequences $\{r_i\}_{i=0}^{J-2}$ and $\{u_i\}_{i=0}^{J-2}$, and the second summation term is equal to $g_0(\alpha(\bar{x},\{r_i\}_{i=0}^{J-1},\{u_i\}_{i=0}^{J-1},J), \theta_{J})$,  we have:
%\begin{multline}
% g_0(\bar{x},1) \le \min_{r_0} \max_{u_0, \theta_1} \cdots \min_{r_{J-2}} \max_{u_{J-2}, \theta_{J-1}}\left \{ -J\epsilon_0 \right. \\
%		\left.+g_0(\alpha(\bar{x},\{r_i\}_{i=0}^{J-1},\{u_i\}_{i=0}^{J-1},J), \theta_{J})\right\}. \label{sum-J}
%\end{multline}
\begin{equation}
 g_0(\bar{x},1) \le \min_{r_0} \max_{u_0, \theta_1} \cdots \min_{r_{J-2}} \max_{u_{J-2}, \theta_{J-1}}\left \{ -J\epsilon_0 
		+g_0(\alpha(\bar{x},\{r_i\}_{i=0}^{J-1},\{u_i\}_{i=0}^{J-1},J), \theta_{J})\right\}. \label{sum-J}
\end{equation}
Since 
\begin{equation}
g_0(\alpha(\bar{x},\{\widetilde{R}_i(\bar{x})\}_{i=0}^{J-1},\{\widetilde{\mathcal{U}}_i(\bar{x})\}_{i=0}^{J-1},J),\widetilde{\Theta}_{J}(\bar{x})) \le M_0,
\end{equation}
we have:
\begin{equation}
 g_0(\bar{x},1)  \le M_0-J\epsilon_0.
\end{equation}

Furthermore, $J> M_0/\epsilon_0$, thus,
\begin{equation}
V_\infty(\bar{x}) < \epsilon + \max \left \{0, ~M_0-J \epsilon_0\right \} = \epsilon.
\end{equation}

Since, $V_\infty(\bar{x}) < \epsilon$ for every $\epsilon > 0$, 
\begin{equation}
V_\infty(\bar{x}) = 0, \quad \forall \bar{x} \in S_L.
\end{equation}

Finally, the complement of the set $S_L$ is bounded; therefore, \eqref{Vne0Bounded} is bounded.

%%%%%%%%%%%%%%%%%%%%%%%%%%%%%%%%%%%%%%%%%%%%%%%%%%%%%%%%%%%%%%%%%

\nocite{PhDBib:Sasha_FSA} \nocite{PhDBib:Mitra0} \nocite{PhDBib:ROC1996} \nocite{PhDBib:Basar95}
\bibliographystyle{IEEEtran}
\bibliography{acompat,My_PhD_Bibliography}

\end{document}